\documentclass{amsart}
\usepackage{amsfonts,amsmath,amsthm,amssymb}
\usepackage[linesnumbered,ruled,noend]{algorithm2e}
\usepackage{comment}
\usepackage{float}
\usepackage{latexsym}
\usepackage{graphicx}
\usepackage{multicol}
\usepackage{color}
\usepackage{pifont}
\usepackage{enumerate}
\usepackage{lmodern}
\usepackage{makecell}
\usepackage{adjustbox}
\usepackage[backref,pdftex=true,colorlinks,linkcolor=blue,citecolor=red,pdfstartview=FitH]{hyperref}
\usepackage{setspace}
\usepackage[all]{xy}
\usepackage{pb-diagram,pb-xy}
\usepackage{pdflscape}
\usepackage{chngpage}
\usepackage{breakurl}
\usepackage[dvipsnames]{xcolor}

\makeatletter
\def\BState{\State\hskip-\ALG@thistlm}
\makeatother
\theoremstyle{plain}
\newtheorem{theorem}{Theorem}[section]
\newtheorem{lemma}[theorem]{Lemma}
\newtheorem{proposition}[theorem]{Proposition}
\newtheorem{corollary}[theorem]{Corollary}

\theoremstyle{definition} 
\newtheorem{definition}[theorem]{Definition}

\newtheorem*{example*}{Example}
\newtheorem{remark}[theorem]{Remark}

\newcommand{\Q}{\mathbb{Q}}
\newcommand{\Z}{\mathbb{Z}}
\newcommand{\NN}{\mathbb{N}}

\newcommand{\p}{\mathfrak{p}}

\newcommand{\OO}{\mathcal{O}}

\begin{document}

\title[]{On the solutions to $Ax^p+By^p+Cz^p=0$ over quadratic fields}

\author{Alejandro Arg\'{a}ez-Garc\'{i}a}
\address{Facultad de Matemáticas, Universidad Aut\'{o}noma de Yucat\'{a}n. Perif\'{e}rico Norte Kil\'{o}metro 33.5, Tablaje Catastral 13615 Chuburna de Hidalgo Inn, M\'{e}rida, Yucat\'{a}n, M\'{e}xico. C.P. 97200 }
\email{alejandro.argaez@correo.uady.mx}

\author{Luis Elí Pech-Moreno}
\address{Facultad de Matemáticas, Universidad Aut\'{o}noma de Yucat\'{a}n. Perif\'{e}rico Norte Kil\'{o}metro 33.5, Tablaje Catastral 13615 Chuburna de Hidalgo Inn, M\'{e}rida, Yucat\'{a}n, M\'{e}xico. C.P. 97200  }
\email{luis.eli.pech@gmail.com}
\date{\today}

\keywords{Fermat equations, Diophantine equations, hyperelliptic curves}
\subjclass[2010]{Primary 11D61, Secondary 11D41, 11D59, 11J86.}

\begin{abstract}
We provide the necessary conditions for the existence of solutions $(x,y,z)$ to $Ax^p+By^p+Cz^p=0$ over any quadratic number field $K$ with $A,B,C$ pth powerfree integer numbers. We determine when $x$, $y$ and $z$ are rational numbers for pairwise coprime integers $A$, $B$ and $C$. Moreover, we prove that $x$, $y$ and $z$ are in $K\setminus\Q$ when $BC=\pm 1$ and $A\neq \pm 2$. Finally, we prove that no solutions $(x,y,z)$ to $Ax^p+By^p+Cz^p=0$ exist in $K\setminus\Q$ when $BC\neq \pm 1$.
\end{abstract}
\maketitle

\section{Introduction} 

The study of the Diophantine equation $Ax^p+By^p+Cz^p=0$ and its solutions has been of interest for a long time, particularly over different fields. A classical approach in which we could analyse its solutions is to, assuming we have a solution to it, construct either an elliptic curve or a hyperelliptic curve, depending on p, and classify the points on that curve. For example, when $p=3$, one can construct an elliptic curve over $\Q$ to determine the solutions to $x^3+y^3-kz^3=0$ in any quadratic field $\Q(\sqrt{d})$, \cite{ADP}.

In this article, we prove, using classical techniques, that the Diophantine $Ax^p+By^p+Cz^p$ over $\Q(\sqrt{d})$ does not have solutions in $K\backslash \Q$ when $BC\neq \pm1$. Our approach is to assume that the Diophantine equation has a nontrivial solution $(x,y,z)$, then construct the hyperelliptic curve $Y^2=X^p+\dfrac{A^2(BC)^{p-1}}{4}$ over that quadratic field and fully describe all possible points $(X,Y)$ in order to describe the initial solution $(x,y,z)$.

The following theorem and corollary summarise our results.

\begin{theorem}
Let $(x,y,z)$ be a solution to $Ax^p+By^p+Cz^p=0$ where $x$, $y$, $z$ are in $\Q(\sqrt{d})$ with $xyz\neq 0$, $A$, $B$, $C$ are $p$th powerfree coprime integers, $d$ is a squarefree integer, and $p>3$ is a prime number. Let $Y^2=X^p+\dfrac{A^2(BC)^{p-1}}{4}$ be the associated hyperelliptic curve. Then,
\begin{enumerate}[$(a)$]
\item $(x,y,z)$ is a rational solution when $A$, $B$ and $C$ are pairwise coprime, and $Y$ is a rational number.
\item $(x,y,z)=(x,u\overline{z},z)$ is a $K$-rational solution when $BC=\pm 1$ and $Y=n\sqrt{d}$, where $u$ is a unit and $n$ is a rational number.
\end{enumerate}
\end{theorem}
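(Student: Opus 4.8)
The plan is to turn the hypothetical solution into a $K$-rational point on the given hyperelliptic curve and then read off constraints on $x,y,z$ from the location of that point. First I would record the correspondence explicitly. Writing $S=By^p/x^p$ and $T=Cz^p/x^p$, the original equation gives $S+T=-A$, while $(S+T)^2-(S-T)^2=4ST=4BC(yz/x^2)^p$. Setting $X=-BC\,yz/x^2$ and $Y=(BC)^{(p-1)/2}(By^p-Cz^p)/(2x^p)=(BC)^{(p-1)/2}(S-T)/2$, and using that $p-1$ is even, a direct computation shows $Y^2=X^p+\tfrac{A^2(BC)^{p-1}}{4}$, so $(X,Y)$ is a $K$-rational point on the curve. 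Thus every coordinate constraint on $(X,Y)$ translates back into information about $x,y,z$.

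The engine of both parts is the following descent fact: if $w\in K^{\times}$ satisfies $w^p\in\Q$, then $w\in\Q$. Indeed $\overline{w}^{\,p}=\overline{w^p}=w^p$, so $(w/\overline{w})^p=1$ and $w/\overline{w}$ is a $p$th root of unity lying in $K=\Q(\sqrt d)$; since $[\Q(\zeta_p):\Q]=p-1\ge 4>2$ for the prime $p>3$, the field $K$ contains no primitive $p$th root of unity, forcing $w/\overline{w}=1$ and hence $w\in\Q$. I would apply this first to $X$ itself: in case $(a)$ we have $Y\in\Q$, and in case $(b)$ we have $Y=n\sqrt d$ so $Y^2=n^2d\in\Q$; either way $X^p=Y^2-\tfrac{A^2(BC)^{p-1}}{4}\in\Q$, whence $X\in\Q$.

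For part $(a)$, knowing $Y\in\Q$ makes both $S+T=-A$ and $S-T=2Y/(BC)^{(p-1)/2}$ rational, so $S=By^p/x^p$ and $T=Cz^p/x^p$ are rational; since $A,B,C$ are nonzero integers this gives $(y/x)^p,(z/x)^p\in\Q$, and the descent fact applied to $w=y/x$ and $w=z/x$ yields $y/x,z/x\in\Q$. Hence $x:y:z\in\P^2(\Q)$ and the solution is rational. For part $(b)$, where $BC=\pm1$ and $(BC)^{(p-1)/2}=\pm1$, I would instead exploit that $Y=n\sqrt d$ has trace zero: from $S+T=-A\in\Q$ and $S-T\in\sqrt d\,\Q$ one gets $\overline S=T$, i.e. $Bx^p\overline{y}^{\,p}=C\overline{x}^{\,p}z^p$. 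Multiplying by $B$ and using $BC=\pm1$ turns this into $(x\overline y)^p=\pm(\overline x z)^p$; since $p$ is odd the sign is itself a $p$th power, so the descent fact gives $x\overline y=\pm\overline x z$, equivalently $y=\pm(x/\overline x)\overline z$. This exhibits $y=u\overline z$ with $u=\pm x/\overline x$, an element of norm $N(u)=1$.

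The step I expect to be the real obstacle is the last one: upgrading $u=\pm x/\overline x$ from a norm-one element to an honest unit. The clean way is to use the projective freedom together with the integral normalization of the solution: rescaling the representative $(x,y,z)$ by $\overline x$ replaces it by $(N(x),\,\overline x y,\,\overline x z)$ with $\overline x y=\pm x\overline z=\pm\overline{(\overline x z)}$, so the normalized solution literally satisfies $y=\pm\overline z$, giving $u=\pm1$. Making this rigorous requires pinning down the integrality and coprimality conventions for $x,y,z$ (so that the associate-of-$\overline x$ manipulation is legitimate) and checking the two sign cases $B=C$ and $B=-C$ separately; this bookkeeping, rather than any deep input, is where the care is needed.
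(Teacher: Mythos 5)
Your proposal is correct and establishes both parts, but by a genuinely different route from the paper's. The paper's arithmetic engine is Theorem \ref{thm:imaginarypartzero}, whose hard cases (Lemmas \ref{lem:imre_i} and \ref{lem:imre_ii}) are proved by a lengthy induction on the power of $p$ dividing the terms of the binomial expansion of $(a+b\sqrt{d})^p$, split according to whether $p\mid d$; you replace all of this by the cyclotomic observation that $w^p\in\Q$ forces $(w/\overline{w})^p=1$, while $K$ contains no nontrivial $p$th root of unity since $[\Q(\zeta_p):\Q]=p-1>2$ --- a shorter argument, uniform in $d$, which also makes transparent exactly where $p>3$ enters (for $p=3$ the field $\Q(\sqrt{-3})$ breaks it, matching the paper's exceptional case in Lemma \ref{lem:p3}). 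In part $(a)$ the paper deduces $z=\gamma x$ with $\gamma\in\Q$ and then routes through Proposition \ref{thm:multiplosracionales} to manufacture a primitive integer solution, while you obtain $y/x,\,z/x\in\Q$ directly from the rationality of $S=By^p/x^p$ and $T=Cz^p/x^p$; the paper's detour buys an explicit primitive solution in $\Z$, yours gives projective rationality, which is all the statement can literally assert anyway (scaling a rational solution by $\sqrt{d}$ changes neither $Y$ nor the hypotheses, so ``rational solution'' must be read up to a common scalar in both treatments). In part $(b)$ your trace identity $\overline{S}=T$ is exactly equation $(\ref{eq:Ymn01})$ of Proposition \ref{thm:finalthm01} specialized to $m=0$, but you then go further than the paper: where the paper merely computes $N(y)=N(z)$ and declares $u=y/\overline{z}$ a unit, you identify $u=\pm x/\overline{x}$ explicitly by extracting $p$th roots from $(x\overline{y})^p=\pm(\overline{x}z)^p$, and you correctly flag that a norm-one element of $K$ need not lie in $\OO_K^{\times}$; your rescaling of the representative by $\overline{x}$, which produces $y=\pm\overline{z}$ on the nose, actually repairs this looseness in the paper's own proof rather than merely matching it. One caveat on scope: because the statement's hypotheses already prescribe $Y\in\Q$ or $Y=n\sqrt{d}$, you may legitimately skip the paper's Theorem \ref{thm:finalthm00} (that $mn=0$ for every point arising from a nontrivial solution), but to classify arbitrary solutions, as the paper's concluding corollary does, that step would have to be restored.
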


\begin{corollary}
There are no solutions $(x,y,z)$ to $Ax^p+By^p+Cz^p=0$ in $K\setminus\Q$ when $BC\neq 1$.
\end{corollary}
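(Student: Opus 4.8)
The plan is to deduce the Corollary from the Theorem by showing that the single alternative the Theorem leaves open for a non-rational solution can occur only when $BC=\pm1$. Suppose, for contradiction, that $(x,y,z)\in K\setminus\Q$ solves $Ax^p+By^p+Cz^p=0$ with $xyz\neq0$ and $BC\neq1$. As in the Theorem, I would attach to this solution the point $(X,Y)$ on $Y^2=X^p+\tfrac{A^2(BC)^{p-1}}{4}$ defined by $X=-BC\,yz/x^2$ and $Y=(BC)^{(p-1)/2}(By^p-Cz^p)/(2x^p)$. Substituting $Ax^p=-(By^p+Cz^p)$ verifies that $(X,Y)$ lies on the curve, and the same formulas, read backwards, recover $By^p=\tfrac{x^p}{2}\bigl(-A+2Y/(BC)^{(p-1)/2}\bigr)$ and $Cz^p=\tfrac{x^p}{2}\bigl(-A-2Y/(BC)^{(p-1)/2}\bigr)$, hence $y$ and $z$ from $(X,Y)$ and $x$.

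The first step is to pin down the shape of $Y$. Since the curve is defined over $\Q$, the Galois conjugate $(\overline X,\overline Y)$ is again a $K$-point of the same curve. Comparing $(X,Y)$ with $(\overline X,\overline Y)$ and using that $A,B,C$ are $p$th-power-free coprime integers, I would show that the degree-zero quantity $X=-BC\,yz/x^2$ is fixed by $\gal(K/\Q)$, i.e. $X\in\Q$. Granting $X\in\Q$ and $\tfrac{A^2(BC)^{p-1}}{4}\in\Q$, the curve equation forces $Y^2\in\Q$; writing $Y=a+b\sqrt d$ with $a,b\in\Q$ gives $2ab=0$, so either $Y\in\Q$ or $Y=n\sqrt d$ for some $n\in\Q$. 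These are exactly the two regimes addressed by the Theorem.

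It remains to eliminate both. If $Y\in\Q$, then part $(a)$ of the Theorem shows $(x,y,z)$ is a rational solution, contradicting $(x,y,z)\in K\setminus\Q$. So we must be in the case $Y=n\sqrt d$ with $n\neq0$. Here I would feed $Y=n\sqrt d$ into the recovery formulas: the two factors $-A\pm 2n\sqrt d/(BC)^{(p-1)/2}$ are Galois conjugates, so $By^p$ and $Cz^p$ become conjugate up to the rational factor $B/C$, whence $y^p/z^p=(C/B)\,w/\overline w$ with $w=-A+2n\sqrt d/(BC)^{(p-1)/2}$. Passing to the factorizations of the fractional ideals $(y),(z)$ in $\OO_K$, the coprimality of $B$ and $C$ forces every rational prime dividing $BC$ to occur to an exponent divisible by $p$, which is incompatible with $p$th-power-freeness of $B$ and $C$ unless $BC$ is a unit, that is $BC=\pm1$; in that case $B,C\in\{\pm1\}$ and the relation collapses to $z^p=\pm\,\overline y^{\,p}$, giving the associate relation $y=u\overline z$ of part $(b)$. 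Thus a genuinely non-rational solution forces $BC=\pm1$, contrary to hypothesis, and no solution exists in $K\setminus\Q$.

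I expect the rationality of $X$ and the subsequent passage from the trace-zero condition $Y=n\sqrt d$ to the unit relation $y=u\overline z$ to be the main obstacle: once $Y^2\in\Q$ the split into the two cases of the Theorem is automatic, but proving $BC\,yz/x^2\in\Q$ and then converting the conjugacy of $By^p$ and $Cz^p$ into the divisibility bookkeeping that excludes $BC\neq\pm1$ is precisely the arithmetic content underlying the Theorem and is not a formal manipulation. A secondary subtlety is to reconcile the precise boundary: the argument rules out all non-rational solutions with $BC\neq\pm1$, matching part $(b)$, so one should confirm that the inequality in the Corollary is read as $BC\neq\pm1$ in order that the two cases of the Theorem be genuinely exhaustive and mutually exclusive.
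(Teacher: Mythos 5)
Your overall architecture is the same as the paper's (attach the point $(X,Y)$ to the solution, show $Y$ is either rational or a rational multiple of $\sqrt{d}$, then eliminate each branch), but the pivotal first step is left genuinely unproved. You assert you ``would show'' that $X=-BCyz/x^2$ is fixed by $\gal(K/\Q)$ because the conjugate point $(\overline{X},\overline{Y})$ lies on the same curve; this argument cannot work as stated, since the curve is defined over $\Q$, so the conjugate of \emph{any} $K$-point is automatically again a $K$-point, and two conjugate points need not coincide. The rationality of $X$ (equivalently, $mn=0$ for $Y=m+n\sqrt{d}$, via Proposition \ref{prop:ifandonlyif}) is exactly the content of Theorem \ref{thm:finalthm00}, whose proof is a nontrivial computation: from the conjugation identities of Proposition \ref{thm:finalthm01} one multiplies the $m$-equation by $n$ and the $n$-equation by $m$, substitutes $B{y^p}/{x^p}=-A-C{z^p}/{x^p}$ and its companion, and after cancelling the nonzero factor $(m-n\sqrt{d})$ obtains $Ax^p=By^p-Cz^p$; combined with $Ax^p+By^p+Cz^p=0$ this forces $y=0$, contradicting nontriviality by Lemma \ref{lem:contra}. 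Without this computation (or a substitute for it) your split into the cases $Y\in\Q$ and $Y=n\sqrt{d}$ is unjustified, and you yourself flag it as ``the main obstacle'' rather than closing it, so the proposal has a real gap precisely where the theorem does its work.

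The second half of your plan is sound in outline but heavier than necessary. From $m=0$ one gets $B\overline{x}^p y^p=Cx^p\overline{z}^p$, and a single application of the norm gives $B^2/C^2=\left(N(z)/N(y)\right)^p$, which already contradicts $B,C$ coprime and $p$th powerfree unless $BC=\pm 1$; then $N(y)=N(z)$ makes $u=y/\overline{z}$ a unit, yielding part $(b)$. This is the paper's route, and it avoids the fractional-ideal bookkeeping you propose, whose claim that ``every rational prime dividing $BC$ occurs to an exponent divisible by $p$'' would still have to handle ramified primes and the failure of unique factorization that the paper is explicitly careful about. Two smaller points you handle correctly: the corollary's hypothesis must indeed be read as $BC\neq\pm 1$ (as in the abstract and the end of Section 4 --- the statement with $BC\neq 1$ is a typo, and for $BC=-1$ nonrational solutions $(x,u\overline{z},z)$ genuinely occur); and note that $Y=0$ falls into your $Y\in\Q$ branch, where the paper's Proposition \ref{prop:patch} pins down $A=\pm 2$, $BC=\pm 1$, while for $Y\in\Q$, $Y\neq 0$ the conclusion ``$(x,y,z)$ is rational'' means the ratios $z/x$ and $y/x$ are rational (Theorem \ref{thm:imaginarypartzero} plus Proposition \ref{thm:multiplosracionales}), i.e.\ the solution is projectively rational, which is the sense in which it fails to lie in $K\setminus\Q$.
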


\section{Diophantine equations and hyperelliptic curves} 
Let $(x,y,z)$ be a solution to the Diophantine equation 
\begin{equation}\label{eq:00}
Ax^p+By^p+Cz^p=0
\end{equation}
for $p>3$ prime, $x\neq 0$, and $A$, $B$ and $C$ being pairwise coprime $p$th powerfree integers. There is a standard change of variable (Proposition 6.4.13. \cite{cohen01}) to obtain the hyperelliptic curve 

\begin{equation}\label{eq:01}
Y^2 = X^{p}+\dfrac{A^2(BC)^{p-1}}{4}
\end{equation}
given by 
\begin{equation}\label{eq:cambiodevariable} 
X=\dfrac{-BCyz}{x^2}, \quad Y=\dfrac{(-BC)^{\frac{(p-1)}{2}}(By^p-Cz^p)}{(2x^{p})}
\end{equation}
 
In other words, given a solution $(x,y,z)$ to $(\ref{eq:00})$ we can construct a point $(X,Y)$ on $(\ref{eq:01})$ given by $(\ref{eq:cambiodevariable})$. On the other hand, take the hyperelliptic curve 
$$Y^2 = X^p+(2^{p-1}A(BC)^{\frac{p-1}{2}})^2$$ 
which is obtained directly from $(\ref{eq:00})$ by multiplying it by $2^{2p}$, then $$(x,y,z)=(A^{\frac{-1}{p}}(2^{p-1}A(BC)^{\frac{p-1}{2}})^4 XY,-B^{\frac{-1}{p}}(2^{p-1}A(BC)^{\frac{p-1}{2}})^2X^pY,C^{\frac{-1}{p}}(2^{p-1}A(BC)^{\frac{p-1}{2}})^2XY^2)$$
is a $\overline{K}$-rational solution to $(\ref{eq:01})$. 

We are interested in studying what we call \textit{nontrivial solutions} in $K$ to $Ax^p+By^p+Cz^p=0$, which is any triplet $(x,y,z)$ with $x$, $y$, and $z$ in $K$ satisfying $(\ref{eq:00})$ with $xyz\neq 0$. We will prove in Lemma \ref{lem:contra}, that the only solution $(x,y,z)$ to $(\ref{eq:00})$ with $xyz=0$ is $(0,0,0)$ when $A$, $B$, $C$ are pairwise coprime with $AB\neq \pm 1$, and is $(\pm 1, 1,0)$ when $AB=\pm 1$.

\section{Arithmetic on Quadratic extensions}

We are looking for nontrivial solutions $(x,y,z)$ to $Ax^p+By^p+Cz^p=0$ over any quadratic field $K=\Q(\sqrt{d})$ with $d$ squarefree integer and, in particular, over its ring of integers $\OO_K$. It is important to remark that we do not necessarily have unique factorization in $\OO_K$, which means we have to adjust certain definitions before continuing our work.

Having this remark on our mind, let $a$, $b$, and $c$ be elements in $\OO_K$, then  we say \textit{$a$ divides $b$} if there exists a factorization in $\OO_K$ such that $b=ac$. Furthermore, we will say that $a$ and $b$ are \textit{coprime} if they do not have irreducible elements in common in any of their factorizations, and we will denote this as $\gcd(a,b)=1$. Finally, the notation $\gcd(x,y,z)=1$ means that those elements are pairwise coprime, i.e., $ \gcd(x,y)=1$, $\gcd(x,z)=1$ and $\gcd(y,z)=1$ simultaneously.

Observe that when $\OO_K$ is a unique factorization domain, the phrases ``if there exists a factorization in $\OO_K$", ``in any of their factorizations" and ``irreducible elements" are replaced by ``they can be factorized in $\OO_K$ as", ``in their factorization" and ``prime elements", respectively. Furthermore, we can apply any descent technique at any point, and it will hold up. On the other hand, when $\OO_K$ is not a unique factorization domain, we cannot apply any descent technique. The proofs presented in the following sections do not depend on any descent technique because we are not assuming $\OO_K$ is a unique factorization domain.

\begin{definition}\label{def:onlyone}
The triplet $(x,y,z)$ is a \textit{primitive solution} in $\OO_K$ to $Ax^p+By^p+Cy^p=0$ if is not trivial and $x$, $y$, and $z$ are pairwise coprime.
\end{definition}

It is not difficult to see that when $\OO_K$ is a unique factorization domain, the coefficients $A$, $B$, $C$ and any primitive solution $(x,y,z)$ naturally satisfy the conditions $\gcd(A,y,z)=1$, $\gcd(B,x,z)=1$ and $\gcd(C,x,y)=1$. On the other hand, when $\gcd(x,y)\neq 1$ and $\OO_K$ is not a unique factorization domain, we do not necessarily have that $\gcd(x,y,z)\neq 1$. In general, it only means that we can find another factorization for $Cz^p$.

\begin{proposition}\label{prop:primosrelativos00}
Let $x$ and $y$ be in $\OO_K$, then $\gcd(x,y)=1$ if and only if $\gcd(\overline{x},\overline{y})=1$.
\end{proposition}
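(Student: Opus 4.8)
The plan is to exploit the fact that conjugation $\sigma\colon K\to K$, which sends $\sqrt{d}\mapsto-\sqrt{d}$, restricts to a ring automorphism of $\OO_K$. First I would record the structural consequences of this observation. Since $\sigma$ fixes $\Q$ and permutes the conjugate roots of any minimal polynomial over $\Q$, it carries algebraic integers to algebraic integers, so $\sigma(\OO_K)=\OO_K$; and because $\sigma$ is a multiplicative bijection it sends units to units and irreducible elements to irreducible elements. Indeed, if $\overline{\pi}=ab$ were a nontrivial factorization of $\overline{\pi}$, then applying $\sigma$ (and using $\sigma^2=\mathrm{id}$) would give $\pi=\overline{a}\,\overline{b}$, a nontrivial factorization of $\pi$; hence $\pi$ irreducible forces $\overline{\pi}$ irreducible.

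With these facts in hand, I would prove one implication in contrapositive form. Assume $\gcd(\overline{x},\overline{y})\neq1$, so that some irreducible $\pi$ occurs in a factorization of $\overline{x}$ and in a factorization of $\overline{y}$; write $\overline{x}=\pi s$ and $\overline{y}=\pi t$ in $\OO_K$. Applying $\sigma$ and using that it is a ring homomorphism yields $x=\overline{\pi}\,\overline{s}$ and $y=\overline{\pi}\,\overline{t}$, which exhibits the irreducible $\overline{\pi}$ as a common factor occurring in a factorization of $x$ and of $y$. Hence $\gcd(x,y)\neq1$, establishing that $\gcd(x,y)=1$ implies $\gcd(\overline{x},\overline{y})=1$.

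For the reverse implication I would apply the implication just obtained to the pair $(\overline{x},\overline{y})$ in place of $(x,y)$. This gives that $\gcd(\overline{x},\overline{y})=1$ implies $\gcd(\overline{\overline{x}},\overline{\overline{y}})=1$, and since conjugation is an involution we have $\overline{\overline{x}}=x$ and $\overline{\overline{y}}=y$, so $\gcd(x,y)=1$. The two conditions are therefore equivalent.

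The only genuine subtlety, and the step I would be most careful about, is reconciling the preservation of irreducibility with the paper's definition of coprimality, which quantifies over all factorizations rather than presupposing unique factorization. The clean way to phrase it is that $\sigma$, being a bijection of $\OO_K$ that sends irreducibles to irreducibles and factorizations to factorizations, induces a bijection between the factorizations of $\alpha$ and those of $\overline{\alpha}$ for every $\alpha$; this is exactly what makes ``having no common irreducible factor in any factorization'' a property invariant under conjugation. Notably, no descent or unique factorization is invoked, so the argument remains valid in the non-UFD setting the authors stress.
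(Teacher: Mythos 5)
Your proposal is correct and follows essentially the same route as the paper: conjugate a putative common factorization $\overline{x}=\p x_1$, $\overline{y}=\p y_1$ to exhibit $\overline{\p}$ as a common factor of $x$ and $y$, with the converse following by the involutive symmetry of conjugation. You are in fact slightly more careful than the paper, which leaves implicit both that conjugation carries irreducibles to irreducibles and the symmetry argument for the reverse implication.
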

\begin{proof}
Suppose we have $\gcd(x,y)=1$ but $\gcd(\overline{x},\overline{y})\neq 1$, there exists $\p$ an irreducible element in $\OO_K$, and $\bar{x}$, $\bar{y}$ denoting their conjugates. Then, there exists a factorization such that $\overline{x}=\p x_1$ and $\overline{y}=\p y_1$. By conjugating them again, we obtain $x=\overline{\p}\overline{x}_1$ and $y=\overline{\p}\overline{y}_1$, so $\gcd(x,y)\neq 1$, which is a contradiction.
\end{proof}
Recall that for any $x$ in $K$, we can compute its norm, which has the standard notation $N(x)$. Moreover, for any $x$ in $\OO_K$, we will have $N(x)$ in $\Z$.
\begin{proposition}\label{prop:primosrelativos01}
Let $x$ and $y$ be in $\OO_K$ such that $\gcd(N(x),N(y))=1$, then $\gcd(x,y)=1$.
\end{proposition}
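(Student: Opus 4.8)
The plan is to prove the contrapositive: assuming $\gcd(x,y)\neq 1$, I would produce a rational prime dividing both $N(x)$ and $N(y)$, contradicting $\gcd(N(x),N(y))=1$. The whole argument rests on the multiplicativity of the norm, $N(ab)=N(a)N(b)$ for $a,b\in\OO_K$, together with the fact recalled just before the statement that $N$ maps $\OO_K$ into $\Z$.

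First I would unwind the notion of non-coprimality from Section 3. If $\gcd(x,y)\neq 1$, then by definition there is an irreducible element $\p\in\OO_K$ occurring in a factorization of $x$ and in a factorization of $y$; that is, there exist $x_1,y_1\in\OO_K$ with $x=\p x_1$ and $y=\p y_1$. Taking norms and using multiplicativity gives $N(x)=N(\p)N(x_1)$ and $N(y)=N(\p)N(y_1)$, so the integer $N(\p)$ divides both $N(x)$ and $N(y)$ in $\Z$.

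Next I would argue that $|N(\p)|\geq 2$. Since $\p$ is irreducible it is in particular a nonzero non-unit, and an element of $\OO_K$ is a unit precisely when its norm equals $\pm 1$; hence $N(\p)\neq 0,\pm 1$. Choosing any rational prime $\ell$ dividing $N(\p)$, we obtain $\ell\mid N(x)$ and $\ell\mid N(y)$, so $\gcd(N(x),N(y))\geq \ell>1$, contradicting the hypothesis. This contradiction forces $\gcd(x,y)=1$.

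The argument is short, and the only point requiring care is the non-UFD setting: I must invoke the divisibility and coprimality conventions of Section 3 (namely that $\p$ divides $x$ means $x=\p x_1$ for some $x_1\in\OO_K$, and that a shared irreducible need only appear in \emph{some} pair of factorizations) rather than any unique-factorization statement, and I must justify $|N(\p)|\geq 2$ solely through the characterization of units by the norm, which holds in every $\OO_K$ independently of unique factorization. No descent or factorization-uniqueness is used, in keeping with the standing convention of the paper.
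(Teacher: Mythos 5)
Your proof is correct and takes essentially the same route as the paper's: extract an irreducible common divisor $\p$ of $x$ and $y$, apply multiplicativity of the norm, and conclude that $N(\p)$ is a common divisor of $N(x)$ and $N(y)$ in $\Z$. Your explicit verification that $|N(\p)|\geq 2$ via the norm characterization of units closes a small step the paper leaves implicit (without it, $N(\p)=\pm 1$ would yield no contradiction), but this is a refinement of the same argument rather than a different approach.
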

\begin{proof}
Suppose we have $\gcd(x,y)\neq 1$, then there exists $\p$ irreducible in $\OO_K$, e.g.,  $x = \p x_1$ and $y=\p y_1$ for some $x_1$ and $y_1$ in $\OO_K$. Applying the norm on both numbers, we obtain $N(x)=N(\p)N(x_1)$ and $N(y)=N(\p)N(y_1)$. Therefore $\gcd(N(x),N(y))\neq 1$.
\end{proof}

Observe that $\gcd(x,y)=1$ does not necessarily imply $\gcd(N(x),N(y))=1$. For example $\gcd_{\Q(i)}(1+i,1-i)=1$ but $\gcd_\Z(N(1+i),N(1-i))=2$.

\begin{proposition}\label{prop:primosrelativos02}
Let $A$ and $B$ be integer numbers such that $\gcd_\Z(A,B)=1$, then $\gcd_K(A,B)=1$.
\end{proposition}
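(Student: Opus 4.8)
The plan is to reduce the claim to the norm criterion already established in Proposition \ref{prop:primosrelativos01}. The essential observation is that $A$ and $B$, being rational integers, lie in $\Z \subseteq \OO_K$, so it makes sense to speak of $\gcd_K(A,B)$, and that their norms as elements of the quadratic field are simply their squares. Concretely, for any rational number $r$ viewed inside $K=\Q(\sqrt{d})$, the Galois conjugate $\overline{r}$ equals $r$, so that $N(r)=r\,\overline{r}=r^2$. Applying this to $A$ and $B$ gives $N(A)=A^2$ and $N(B)=B^2$, both ordinary integers.

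First I would carry out this norm computation, recording that $N(A)=A^2$ and $N(B)=B^2$. Next I would transfer the hypothesis $\gcd_\Z(A,B)=1$ through squaring: no rational prime divides both $A$ and $B$, and since a prime divides $A^2$ exactly when it divides $A$, it follows that $\gcd_\Z(A^2,B^2)=1$, that is, $\gcd_\Z\big(N(A),N(B)\big)=1$. Finally I would invoke Proposition \ref{prop:primosrelativos01} directly: with $A,B\in\OO_K$ and $\gcd_\Z\big(N(A),N(B)\big)=1$, that proposition yields $\gcd_K(A,B)=1$, which is exactly the assertion.

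There is no serious obstacle here; the content is entirely contained in the identity $N(A)=A^2$ together with the elementary fact that squaring preserves coprimality in $\Z$. The only point requiring care is to confirm that the hypotheses of the earlier proposition are literally met, namely that $A,B\in\OO_K$ (immediate from $\Z\subseteq\OO_K$) and that the relevant norms are the integers just computed. Note also that we only use the forward implication of Proposition \ref{prop:primosrelativos01}; the converse fails in general, as illustrated by the example preceding it, but it is not needed for this argument.
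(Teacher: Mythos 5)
Your proposal is correct and follows essentially the same route as the paper: both reduce to Proposition \ref{prop:primosrelativos01} via the computation $N(A)=A^2$, $N(B)=B^2$ and the fact that $\gcd_\Z(A,B)=1$ implies $\gcd_\Z(A^2,B^2)=1$. The only cosmetic difference is that you apply that proposition directly while the paper argues by contradiction through its contrapositive, which is logically the same argument.
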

\begin{proof}
Since $\gcd_\Z(A,B)=1$, then $\gcd_\Z(A^2,B^2)=1$. Suppose $\gcd_K(A,B)\neq 1$ then $\gcd_\Z(N(A),N(B))\neq 1$, which is a contradiction because $N(A)=A^2$ and $N(B)=B^2$.
\end{proof}

\begin{proposition}\label{prop:primosrelativos03}
Let $y$ and $z$ be in $\OO_K$ such that $\gcd(y,z)=1$, and $A$ be an integer number. If $\gcd(A,y,z)=1$, then $\gcd(A,N(y),N(z))=1$.
\end{proposition}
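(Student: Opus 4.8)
The plan is to argue by contradiction from the integer side. Since $A$, $N(y)$ and $N(z)$ all lie in $\Z$, the assumption $\gcd(A,N(y),N(z))\neq 1$ means that some rational prime $\ell$ divides each of $A$, $N(y)$ and $N(z)$. I would then try to manufacture a single irreducible element of $\OO_K$ dividing two of the three quantities $A$, $y$, $z$, which contradicts the coprimality hypotheses bundled in $\gcd(A,y,z)=1$.

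First I would exploit $\ell\mid A$. Because $A$ is a rational integer and $\gcd(A,y)=\gcd(A,z)=1$ in $\OO_K$, any irreducible that divides both $\ell$ and $y$ would also divide $A$ and $y$; hence $\gcd(\ell,y)=1$ and, symmetrically, $\gcd(\ell,z)=1$. This step is where the hypothesis $\ell\mid A$ is genuinely used, and it is exactly what blocks the phenomenon recorded in the remark after Proposition~\ref{prop:primosrelativos01}: there the norms of the coprime elements $1\pm i$ shared the factor $2$ precisely because no coefficient forced $2$ away from both conjugates. Next I would bring in $\ell\mid N(y)=y\overline{y}$. Using Proposition~\ref{prop:primosrelativos00}, coprimality is preserved under conjugation and $\overline{\ell}=\ell$, so $\gcd(\ell,y)=1$ gives $\gcd(\ell,\overline{y})=1$ as well; the aim is to deduce that $\ell$ can then share no irreducible factor with the product $y\overline{y}$, contradicting $\ell\mid N(y)$.

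The main obstacle is that $\OO_K$ need not be a unique factorization domain, so an irreducible factor of $\ell$ need not be prime and ``coprime to $y$ and to $\overline{y}$'' does not automatically give ``coprime to $y\overline{y}$''. To handle this I would split according to how $\ell$ factors in $\OO_K$ (inert, ramified, or split): in each case, writing $\ell$ as a product of irreducibles and tracking their conjugates, the combination of $\ell\mid A$ with $\gcd(A,y)=1$ forces a definite irreducible factor of $\ell$ to occur in $y$ or in $\overline{y}$, and Proposition~\ref{prop:primosrelativos00} then converts this into a common irreducible factor of $A$ and $y$. This controlled case analysis, carried out without invoking unique factorization, is the delicate part of the argument and the step I expect to demand the most care.
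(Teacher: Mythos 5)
Your plan stalls exactly at the step you yourself flag as delicate, and the stall is fatal rather than merely technical: the claim that ``the combination of $\ell\mid A$ with $\gcd(A,y)=1$ forces a definite irreducible factor of $\ell$ to occur in $y$ or in $\overline{y}$'' is false at the element level, because irreducible elements of $\OO_K$ need not be prime, and the inert/ramified/split trichotomy you propose lives at the level of ideals, not of element factorizations. Concretely, in $\OO_K=\Z[\sqrt{-5}]$ take $\ell=2$ and $y=1+\sqrt{-5}$: then $2\mid N(y)=6$, yet $2$, $1+\sqrt{-5}$ and $1-\sqrt{-5}$ are all irreducible (there are no elements of norm $2$ or $3$) and pairwise non-associate, so in the paper's element-wise sense $\gcd(2,y)=\gcd(2,\overline{y})=1$ while $2\mid y\overline{y}$. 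Here $(2)=\p^2$ with $\p=(2,1+\sqrt{-5})$ non-principal, so $2$ is ramified as an ideal but behaves like an inert element; your case analysis cannot detect this. Worse, the same data $A=2$, $y=1+\sqrt{-5}$, $z=1-\sqrt{-5}$ satisfies every hypothesis of the proposition in the paper's element-wise sense ($\gcd(y,z)=1$ and $\gcd(A,y,z)=1$, since no two of the three share an irreducible factor in any factorization), while $2$ divides all of $A$, $N(y)$, $N(z)$. So no refinement of the case analysis can close the gap: with the paper's definitions the statement itself fails, and your contradiction can never be reached.

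For comparison, the paper's own proof is two lines: conjugate via Proposition \ref{prop:primosrelativos00} to get $\gcd(A,\overline{y},\overline{z})=1$, then ``apply the norm'' to conclude $\gcd(A^2,N(y),N(z))=1$. That second step is precisely the multiplicativity of coprimality ($\gcd(A,y)=\gcd(A,\overline{y})=1$ implying $\gcd(A^2,y\overline{y})=1$) that you correctly identified as the obstruction; the paper asserts it silently, so your proposal and the paper share the same hole, with the difference that you flagged it honestly. Both arguments do become valid if $\gcd$ is read ideal-theoretically (a prime ideal above $\ell$ dividing $(y\overline{y})$ must divide $(y)$ or $(\overline{y})$, and conjugation finishes) or if $\OO_K$ is a UFD; in the $\Z[\sqrt{-5}]$ example above the ideal-theoretic hypothesis $\gcd(y,z)=1$ fails, since $(1+\sqrt{-5})+(1-\sqrt{-5})=\p\neq(1)$, which is why the ideal version survives. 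The correct fix is therefore to change the framework, not to sharpen the element-level splitting analysis. One further reading point: under the paper's stated convention that a triple $\gcd$ means pairwise coprimality, the conclusion would include $\gcd(N(y),N(z))=1$, which the paper's own example $\gcd(N(1+i),N(1-i))=2$ refutes; your weaker reading (no rational prime dividing all three), or better still just $\gcd(A,N(y))=\gcd(A,N(z))=1$, is the only tenable interpretation, and is also what the subsequent applications actually use.
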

\begin{proof}
Take $y$ and $z$ in $\OO_K$, and $A$ in $\Z$ such that $\gcd(A,y,z)=1$, then by Proposition $\ref{prop:primosrelativos00}$, we have that $\gcd(A,\overline{y},\overline{z})=1$. Applying the norm on $A$, $y$, and $z$, we will get $\gcd(A^2,N(y),N(z))=1$, which implies $\gcd(A,N(y),N(z))=1$.
\end{proof}

We can repeat this analysis for $\gcd(B,x,z)=1$ to get $\gcd(B,N(x),N(z))=1$ and for $\gcd(C,x,y)=1$ to get $\gcd(C,N(x),N(y))=1$. 

\subsection{The real and imaginary parts of $a+b\sqrt{d}$}

Let $a+b\sqrt{d}$ be in $K$ and recall $a$ and $b$ are in $\Q$. We say $a$ is the \textit{real} part of $a+b\sqrt{d}$, denoted by $Re(a+b\sqrt{d})=a$, and  $b$ is the \textit{imaginary} part of $a+b\sqrt{d}$, denoted by $Im(a+\sqrt{d})=b$. We are fully aware these definitions do not make sense when $d>0$, but we decided to keep them due to their practicality when writing and proving statements.

\begin{lemma}\label{lem:p2}
Let $a+b\sqrt{d}$ be in $K$ such that $a$ and $b$ are in $\Z$, then
\begin{enumerate}[$(i)$]
\item $Im\left((a+b\sqrt{d})^2\right)=0$, then either $a=0$ or $b=0$
\item $Re\left((a+b\sqrt{d})^2\right)=0$ if and only if $a=\pm b$ and $d=-1$
\end{enumerate}
\end{lemma}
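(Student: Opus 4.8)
The plan is to expand the square directly and read off the two parts. Since
\[
(a+b\sqrt{d})^2 = (a^2+b^2 d) + 2ab\,\sqrt{d},
\]
we have $Re\big((a+b\sqrt{d})^2\big)=a^2+b^2 d$ and $Im\big((a+b\sqrt{d})^2\big)=2ab$. Part $(i)$ is then immediate: the hypothesis $2ab=0$ forces $ab=0$ in the integral domain $\Z$, so $a=0$ or $b=0$.

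For part $(ii)$ the reverse implication is a one-line verification: if $d=-1$ and $a=\pm b$, then $a^2+b^2 d=a^2-b^2=0$. The content lies in the forward implication, and this is where the squarefreeness of $d$ is used. Assuming $a^2+b^2 d=0$, I would first clear away the degenerate cases: if $b=0$ then $a=0$, while if $a=0$ then $b^2 d=0$ forces $d=0$, contradicting that $d$ is a (nonzero) squarefree integer defining $K$; so both $a$ and $b$ are nonzero, under the tacit assumption $a+b\sqrt{d}\neq 0$.

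With $a,b\neq 0$ the relation rewrites as $d=-(a/b)^2$. Writing $a/b$ in lowest terms shows that $(a/b)^2$ can be an integer only when $a/b$ is itself an integer $k$, so that $d=-k^2$. The decisive observation is that $k^2$ is a perfect square, and the only squarefree perfect square is $1$; since $d$ is squarefree this forces $k^2=1$, hence $k=\pm 1$. Therefore $a=\pm b$ and $d=-1$, which is exactly the claimed equivalence.

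The computation itself is routine, so I expect the only genuine subtlety to be the forward direction of $(ii)$: combining the squarefreeness hypothesis with the rationality of $a/b$, and in particular handling the degenerate cases $a=0$ or $b=0$ cleanly, for which the stated ``if and only if'' tacitly presumes $a+b\sqrt{d}\neq 0$.
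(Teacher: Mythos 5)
Your proof is correct and follows essentially the same route as the paper: expand $(a+b\sqrt{d})^2=(a^2+b^2d)+2ab\sqrt{d}$, read off the parts, and in $(ii)$ use squarefreeness of $d$ to force $d=-(a/b)^2=-1$ and $a=\pm b$. You are in fact slightly more careful than the paper, which silently divides by $b$ and ignores the degenerate case $a=b=0$ (where the stated ``if and only if'' fails unless, as you note, one tacitly assumes $a+b\sqrt{d}\neq 0$).
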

\begin{proof}
Let $a+b\sqrt{d}$ be in $K$ and consider
\[
(a+b\sqrt{d})^2=a^2+b^2d+2ab\sqrt{d}
\]
\begin{enumerate}[$(i)$]
\item When $Im\left((a+b\sqrt{d})^2\right)=0$ we have $2ab=0$, then $a=0$ or $b=0$
\item When $Re\left((a+b\sqrt{d})^2\right)=0$ we have $a^2+b^2d=0$, implying $d=-(a/b)^2$. This is a contradiction unless $a=\pm b$ and $d=-1$
\end{enumerate}
\end{proof}

\begin{lemma}\label{lem:p3}
Let $a+b\sqrt{d}$ be in $K$ such that $a$ and $b$ are in $\Z$, then
\begin{enumerate}[$(i)$]
\item $Im\left((a+b\sqrt{d})^3\right)=0$ and $b\neq 0$ if and only if $a=\pm b$ and $d=-3$
\item $Re\left((a+b\sqrt{d})^3\right)=0$ and $a\neq 0$ if and only if $a=\pm 3b$ and $d=-3$
\end{enumerate}
\end{lemma}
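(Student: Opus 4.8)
The plan is to expand the cube by the binomial theorem and separate the two rational components. Writing
\[
(a+b\sqrt{d})^3=(a^3+3ab^2d)+(3a^2b+b^3d)\sqrt{d}=a(a^2+3b^2d)+b(3a^2+b^2d)\sqrt{d},
\]
so that $Re\big((a+b\sqrt{d})^3\big)=a(a^2+3b^2d)$ and $Im\big((a+b\sqrt{d})^3\big)=b(3a^2+b^2d)$. Each part is now a product of a linear factor ($a$ or $b$) and a quadratic factor, which is exactly the shape exploited in Lemma~\ref{lem:p2}; so both statements should follow by pushing the vanishing onto the quadratic factor and then running the same divisibility bookkeeping.

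For $(i)$, the hypothesis $Im=0$ together with $b\neq 0$ forces $3a^2+b^2d=0$, i.e.\ $d=-3(a/b)^2$. First I would reduce $a/b$ to lowest terms, say $a=ga'$, $b=gb'$ with $\gcd(a',b')=1$, so that $d=-3a'^2/b'^2$ with $\gcd(a'^2,b'^2)=1$. Integrality of $d$ then demands $b'^2\mid 3$, whence $b'=\pm 1$ and $d=-3a'^2$; squarefreeness of $d$ next forces $a'^2=1$, i.e.\ $a'=\pm 1$. Unwinding gives $a=\pm b$ and $d=-3$. The converse is immediate: substituting $d=-3$ and $a^2=b^2$ yields $3a^2+b^2d=3(a^2-b^2)=0$.

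For $(ii)$, the hypothesis $Re=0$ with $a\neq 0$ forces $a^2+3b^2d=0$. Here I would observe that $3\mid a^2$, hence $3\mid a$; writing $a=3a_1$ and dividing the relation by $3$ reduces it to $3a_1^2+b^2d=0$, which is precisely the equation solved in $(i)$ (and $a_1\neq 0$ forces $b\neq 0$). Applying the argument of $(i)$ gives $d=-3$ and $a_1=\pm b$, so $a=\pm 3b$; the converse again follows by direct substitution, since $a=\pm 3b$ and $d=-3$ give $a^2+3b^2d=9b^2-9b^2=0$.

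The only genuinely delicate step is the passage from ``$d=-3(a/b)^2$ is an integer'' to ``$d=-3$''. This is where the coprime reduction and the squarefree hypothesis on $d$ do the real work: without squarefreeness one could instead have $d=-3k^2$ for arbitrary $k$, and without reducing $a/b$ to lowest terms one cannot conclude $b'^2\mid 3$. Everything else is either the routine algebraic identity above or a one-line verification of the converse, and the reduction $a=3a_1$ lets $(ii)$ inherit its proof directly from $(i)$.
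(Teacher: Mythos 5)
Your proof is correct and takes essentially the same route as the paper: expand the cube, factor the real and imaginary parts as $a(a^2+3b^2d)$ and $b(3a^2+b^2d)$, and use squarefreeness of $d$ to pin down the ratio $a/b$. In fact your write-up is more careful than the paper's, which passes from ``$d=-3(a/b)^2$ is a squarefree integer'' to ``$a=\pm b$ and $d=-3$'' without the lowest-terms reduction you supply, and your reduction of $(ii)$ to $(i)$ via $3\mid a^2\Rightarrow 3\mid a$ is a harmless reorganization of the paper's direct computation $d=-a^2/(3b^2)$.
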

\begin{proof}
Let $a+b\sqrt{d}$ with $a$, $b$ in $\Z$ and consider
\[
(a+b\sqrt{d})^3=\left( a^3+3ab^2d\right)+\left( 3a^2b+b^3d\right)\sqrt{d}
\]
\begin{enumerate}[$(i)$]
\item When $Im\left((a+b\sqrt{d})^3\right)=0$ and $b\neq 0$, we have that $-3a^2=b^2d$. It follows that $d=-3(a/b)^2$ is a squarefree integer, which is true if and only if $a=\pm b$ and $d=-3$. Otherwise, $b=0$ and we get a contradiction.
\item When $Re\left((a+b\sqrt{d})^3\right)=0$ and $a\neq 0$, we have that $d=-\dfrac{a^2}{3b^2}$ is a squarefree integer. This is true if and only if $a=\pm 3b$ and $d=-3$. Otherwise, $a=0$ and we get a contradiction.
\end{enumerate}
\end{proof}
Now, for any $p$ prime greater than $3$, there exists an $m$ in $\NN$ such that $p=2m+1$ and $m\geq 2$. Thus, for $(a+b\sqrt{d})^p$ we have that
\[
(a+b\sqrt{d})^p=\sum_{k=0}^m \binom p {2k} a^{p-2k}b^{2k}d^{k}+\left(\sum_{k=0}^m \binom p {2k+1} a^{p-(2k+1)}b^{2k+1}d^{k}\right)\sqrt{d}
\]
and
\begin{align*}
Re\left((a+b\sqrt{d})^p\right)&=a\left(\sum_{k=0}^m \binom p {2k} a^{p-(2k+1)}b^{2k}d^{k}\right)\\
Im\left((a+b\sqrt{d})^p\right)&=b\left(\sum_{k=0}^m \binom p {2k+1} a^{p-(2k+1)}b^{2k}d^{k}\right)
\end{align*}

Observe that $p$ divides $\binom p j$ for $j\in \lbrace 1,\ldots,p-1\rbrace$. 

\begin{lemma}\label{lem:imre_i}
Let $a+b\sqrt{d}$ be in $K$ with $a$, $b$ in $\Z$, $d$ a squarefree integer and $p>3$ a prime number such that $\gcd(p,d)=1$, then
\begin{enumerate}[$(i)$]
\item $Im\left((a+b\sqrt{d})^p\right)=0$ if and only if $b=0$
\item $Re\left((a+b\sqrt{d})^p\right)=0$ if and only if $a=0$
\end{enumerate} 
\end{lemma}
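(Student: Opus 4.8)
The plan is to read off from the two displayed formulas the factorizations $Im\big((a+b\sqrt d)^p\big)=b\,S$ and $Re\big((a+b\sqrt d)^p\big)=a\,R$, where
\[
S=\sum_{k=0}^m \binom p{2k+1} a^{p-2k-1}b^{2k}d^{k},\qquad R=\sum_{k=0}^m \binom p{2k} a^{p-2k-1}b^{2k}d^{k},
\]
and to prove each equivalence by its contrapositive. The reverse implications are immediate: if $b=0$ then $Im=bS=0$, and if $a=0$ then $Re=aR=0$. For the forward implications it suffices to show that $S\neq 0$ whenever $b\neq 0$ (for $(i)$) and that $R\neq 0$ whenever $a\neq 0$ (for $(ii)$).

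Consider $(i)$, and suppose $b\neq 0$. Both $S$ and $R$ are homogeneous of degree $p-1$ in $(a,b)$, so dividing out the integer $\gcd(a,b)$ (a legitimate operation here, since $a,b\in\Z$) we may assume $\gcd(a,b)=1$, which does not affect whether $S$ vanishes. Now I reduce modulo $p$: since $p\mid\binom pj$ for $1\le j\le p-1$, every summand of $S$ except the one with $2k+1=p$ (that is, $k=m$) is divisible by $p$, so $S\equiv b^{2m}d^{m}\pmod p$. If $S=0$ then $p\mid b^{2m}d^{m}$, and because $\gcd(p,d)=1$ this forces $p\mid b$; coprimality then gives $p\nmid a$.

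The contradiction now comes from a $p$-adic valuation count. Writing $v_p$ for the $p$-adic valuation and setting $\beta=v_p(b)\ge 1$ while $v_p(a)=0$, the $k=0$ term $p\,a^{p-1}$ of $S$ has valuation exactly $1$, whereas every term with $1\le k\le m-1$ has valuation $1+2k\beta\ge 3$ and the $k=m$ term $b^{2m}d^{m}$ has valuation $2m\beta\ge 4$. Thus a single term attains the minimal valuation $1$, so $v_p(S)=1$ and in particular $S\neq0$, contradicting $S=0$. Hence $b=0$, proving $(i)$. Part $(ii)$ runs symmetrically: assuming $a\neq0$ and $\gcd(a,b)=1$, reduction mod $p$ leaves only the $k=0$ term of $R$, so $R\equiv a^{p-1}\pmod p$ and $p\mid a$, whence $p\nmid b$; the minimal $p$-adic valuation is then attained uniquely by the $k=m$ term $\binom p{p-1}b^{2m}d^{m}$, which has valuation $1$, forcing $R\neq0$.

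The two places where the hypotheses are doing real work — and the steps I expect to be the crux — are the mod-$p$ reduction, where $\gcd(p,d)=1$ is exactly what converts $p\mid b^{2m}d^m$ into $p\mid b$, and the valuation bookkeeping, which needs $m\ge 2$, i.e.\ $p\ge 5$, for the non-leading terms to strictly exceed valuation $1$. This is precisely why the hypothesis $p>3$ is indispensable, and it dovetails with the exceptional behaviour already visible in Lemmas \ref{lem:p2} and \ref{lem:p3}. Conceptually, the same fact can be seen at a glance: $Im\big((a+b\sqrt d)^p\big)=0$ says $\alpha^p=\bar\alpha^p$ for $\alpha=a+b\sqrt d$, so $\alpha/\bar\alpha$ is a $p$-th root of unity lying in the quadratic field $K$; since $p>3$, a primitive such root would generate $\Q(\zeta_p)$ of degree $p-1\ge 4>2$, which is impossible, forcing $\alpha=\bar\alpha$ and $b=0$.
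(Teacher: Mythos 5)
Your proof is correct, but it takes a genuinely different route from the paper's. The paper proves the forward implications by an open-ended divisibility escalation: from $-b^{p-1}d^m = pa^{p-1}+\sum_{k=1}^{m-1}\binom{p}{2k+1}a^{p-(2k+1)}b^{2k}d^{k}$ it deduces $p\mid b$, then $p\mid a$, then $p^2\mid b$, and finally runs an induction (choosing $n$ maximal with $p^n\mid b$ and $p^{n-1}\mid a$) whose conclusion $p^{n+1}\mid b$ contradicts maximality; no coprimality normalization is ever made. You instead exploit the degree-$(p-1)$ homogeneity of $S$ and $R$ to reduce to $\gcd(a,b)=1$, after which a single reduction modulo $p$ (where $\gcd(p,d)=1$ enters exactly as in the paper) plus one ultrametric comparison — the $k=0$ term of $S$ has $p$-adic valuation exactly $1$ while every other term has valuation at least $2$, and symmetrically the $k=m$ term of $R$ — shows $v_p(S)=1$ and $v_p(R)=1$, hence $S,R\neq 0$. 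This collapses the paper's induction into one step and yields the sharper quantitative fact $v_p(S)=v_p(R)=1$; what the paper's version buys in exchange is that it never needs the gcd normalization. Your closing observation is moreover a genuine second proof: $\alpha^p=\overline{\alpha}^p$ makes $\alpha/\overline{\alpha}$ a $p$-th root of unity in a quadratic field, impossible for a primitive one since $[\Q(\zeta_p):\Q]=p-1>2$, and notably this argument does not use $\gcd(p,d)=1$ at all, so it covers Lemma \ref{lem:imre_ii} uniformly as well. One small inaccuracy in your commentary (not in the proof itself): $m\ge 2$ is not what the valuation bookkeeping requires — the middle terms have valuation $1+2k\beta\ge 3$ and the extreme term valuation $2m\beta\ge 2$ already for $m=1$, so the argument would go through for $p=3$ under the hypothesis $\gcd(p,d)=1$; the genuine $p=3$ exceptions in Lemma \ref{lem:p3} occur only at $d=-3$, where $\gcd(p,d)=p$, and $p=2$ fails for the separate reason that the real part does not factor as $aR$.
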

\begin{proof}
Let $a+b\sqrt{d}$ be in $K$ with $a$, $b$ in $\Z$.
\begin{enumerate}[$(i)$]
\item When $Im\left((a+b\sqrt{d})^p\right)=0$ with $b\neq 0$ and $p=2m+1$ for some $m\in\NN$, then
\begin{align*}
0&=\sum_{k=0}^m \binom p {2k+1} a^{p-(2k+1)}b^{2k+1}d^{k}\\
&=b\left(b^{p-1}d^m+pa^{p-1}+\sum_{k=1}^{m-1} \binom p {2k+1} a^{p-(2k+1)}b^{2k}d^{k}\right)
\end{align*}
Since $b\neq 0$, we have
\begin{align*}
-b^{p-1} d^m&=pa^{p-1}+\sum_{k=1}^{m-1} \binom p {2k+1} a^{p-(2k+1)}b^{2k}d^{k}
\end{align*}
which implies $p\vert b^{p-1}$ and thus $p\vert b$. Then, for each term of sum, we have that
\[
p^{2k+1}\mbox{ divides } \binom p {2k+1} a^{p-(2k+1)}b^{2k}d^{k}
\]
Thus, $p^3 \vert a^{p-1}$ and then $p\vert a$. Due to this, for each term of sum, we have that
\[
p^{p-(2k+1)+2k+1}\mbox{ divides } \binom p {2k+1} a^{p-(2k+1)}b^{2k}d^{k}
\]
Since $p^{p-(2k+1)+2k+1}=p^{p}$, then $p^p \vert -b^{p-1}d^m$ and thus $p^2\vert b$.
\\
Now, suppose $n$ is the maximal natural number such that $p^n \vert b$ and $p^{n-1}\vert a$. In this way, for each term of sum, we have that
\[
p^{(n-1)\left[ p-(2k+1)\right] +2nk+1}\mbox{ divides } \binom p {2k+1} a^{p-(2k+1)}b^{2k}d^{k}
\]
Since $p^{(n-1)\left[p-(2k+1)\right]+2nk+1}=p^{(p-1)(n-1)+2k+1}$, then $p^{(p-1)(n-1)+3}$ divides the sum and $p^{n(p-1)}=p^{(n-1)(p-1)+p-1}\vert b^{p-1}$, then $p^{(n-1)(p-1)+3} \vert a^{p-1}$ and thus $p^n\vert a$. Due to this, for each term of sum, we have that
\[
p^{n\left[p-(2k+1)\right]+2nk+1}\mbox{ divides } \binom p {2k+1} a^{p-(2k+1)}b^{2k}d^{k}
\]
Since $p^{n\left[p-(2k+1)\right]+2nk+1}=p^{n(p-1)+1}$ divides the sum, we then get $p^{n(p-1)+1} \vert -b^{p-1}d^m$ and thus $p^{n+1}\vert b$, which contradicts the maximality of $n$.
\item When $Re\left( (a+b\sqrt{d})^p\right)=0$, we have
\begin{align*}
0&=\sum_{k=0}^m \binom p {2k} a^{p-2k}b^{2k}d^{k}\\
&=a\left(a^{p-1}+pb^{p-1}d^m+\sum_{k=1}^{m-1} \binom p {2k} a^{p-(2k+1)}b^{2k}d^{k}\right)\\
\end{align*}
Suppose $a\neq 0$, then
\[
-a^{p-1}=pb^{p-1}d^m+\sum_{k=1}^{m-1} \binom p {2k} a^{p-(2k+1)}b^{2k}d^{k}
\]
implying $p\vert a^{p-1}$ and thus $p\vert a$. Then, for each term of sum, we have that
\[
p^{p-2k}\mbox{ divides } \binom p {2k} a^{p-(2k+1)}b^{2k}d^{k}
\]
which implies $p^{p-2}\vert pb^{p-1}d^m$, so $p\vert b$. Hence, for each term of sum, we have that
\[
p^{p}\mbox{ divides } \binom p {2k} a^{p-(2k+1)}b^{2k}d^{k}
\]
meaning $p^p\vert pb^{p-1}d^m$, thus $p^p\vert a^{p-1}$, so $p^2\vert a$.
\\
Now, let $n$ be the maximal natural number such that $p^n\vert a$ and $p^{n-1}\vert b$, then
\[
p^{n\left[ p-(2k+1)\right] +2k(n-1)+1}\mbox{ divides } \binom p {2k} a^{p-(2k+1)}b^{2k}d^{k}
\]
Since $p^{n\left[ p-(2k+1)\right] +2k(n-1)+1}=p^{(n-1)(p-1)+p-2k}$, then $p^{(n-1)(p-1)+3}$ divides the sum and $p^{n(p-1)}=p^{(n-1)(p-1)+p-1}\vert a^{p-1}$, we get $p^{(n-1)(p-1)+3} \vert pb^{p-1}d^m$, so $p^n\vert b$. Due to this, for each term of sum, we have that
\[
p^{n\left[p-(2k+1)\right]+2nk+1}\mbox{ divides } \binom p {2k} a^{p-(2k+1)}b^{2k}d^{k}
\]
Since $p^{n\left[p-(2k+1)\right]+2nk+1}=p^{n(p-1)+1}$ divides the sum and $p^{n(p-1)+1} \vert pb^{p-1}d^m$, then $p^{n(p-1)+1}\vert a^{p-1}$ and so $p^{n+1}\vert a$, which contradicts the maximality of $n$.
\end{enumerate}
\end{proof}

\begin{lemma}\label{lem:imre_ii}
Let $a+b\sqrt{d}$ be in $K$ with $a$, $b$ in $\Z$, $d$ a squarefree integer and $p>3$ a prime number such that $\gcd(p,d)=p$, then 
\begin{enumerate}[$(i)$]
\item $Im\left((a+b\sqrt{d})^p\right)=0$ if and only if $b=0$
\item $Re\left((a+b\sqrt{d})^p\right)=0$ if and only if $a=0$
\end{enumerate}
\end{lemma}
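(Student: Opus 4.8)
The plan is to treat parts $(i)$ and $(ii)$ symmetrically, exploiting the fact that the bracketed sums in the displayed formulas for $Re$ and $Im$ just above the lemma are \emph{homogeneous} of degree $p-1$. Write
\[
S(a,b)=\sum_{k=0}^m \binom p{2k+1} a^{p-(2k+1)}b^{2k}d^{k}, \qquad R(a,b)=\sum_{k=0}^m \binom p{2k} a^{p-(2k+1)}b^{2k}d^{k},
\]
so that $Im\bigl((a+b\sqrt d)^p\bigr)=b\,S(a,b)$ and $Re\bigl((a+b\sqrt d)^p\bigr)=a\,R(a,b)$. The ``if'' directions are immediate: $b=0$ gives $Im=0$, and $a=0$ gives $(b\sqrt d)^p=b^pd^{m}\sqrt d$, whose real part vanishes. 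For the ``only if'' directions it therefore suffices to show $S(a,b)=0$ has no integer solution with $b\neq 0$, and $R(a,b)=0$ none with $a\neq 0$. The essential change from Lemma \ref{lem:imre_i} is that now $p\mid d$, and since $d$ is squarefree this gives $\ord_p(d)=1$; together with the stated fact that $p\mid\binom pj$ for $1\le j\le p-1$, this lets me control the $p$-adic valuation of every term.

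First I would prove the key claim: any integer solution of $S(a,b)=0$ with $b\neq 0$ satisfies $p\mid a$ and $p\mid b$. For $p\mid a$, reduce $S(a,b)=0$ modulo $p^2$: for $k\ge 1$ each term is divisible by $d^{k}$, hence by $p^{k}$, and for $1\le k\le m-1$ also by $\binom p{2k+1}$, so every term with $k\ge1$ vanishes mod $p^2$ (the case $k=m$ uses $m\ge 2$). Thus $0\equiv p\,a^{p-1}\pmod{p^2}$, forcing $p\mid a$. Granting $p\mid a$, set $\alpha=\ord_p(a)\ge 1$ and suppose $p\nmid b$. Then $\ord_p\bigl(b^{p-1}d^{m}\bigr)=m$, while $p\,a^{p-1}$ has valuation at least $p$ and, using $p\mid\binom p{2k+1}$, each term with $1\le k\le m-1$ has valuation at least $1+(p-1-2k)\alpha+k\ge m+2$. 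Hence $b^{p-1}d^{m}$ is the unique term of least valuation, so $\ord_p S(a,b)=m<\infty$, contradicting $S(a,b)=0$; therefore $p\mid b$. The argument for $R$ is identical in spirit with the two endpoints swapped: the term $a^{p-1}$ (binomial coefficient $1$) forces $p\mid a$ by an isolated-minimum argument, and then $p\,b^{p-1}d^{m}$ (binomial coefficient $\binom p{p-1}=p$) forces $p\mid b$.

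With the claim in hand, the descent closes the argument. Since $S$ is homogeneous of degree $p-1$, writing $a=pa_1$, $b=pb_1$ gives $S(a,b)=p^{\,p-1}S(a_1,b_1)$, so $(a_1,b_1)$ is again an integer zero of $S$ with $b_1\neq 0$; applying the claim repeatedly yields $p^{n}\mid b$ for every $n$, whence $b=0$, a contradiction, proving $(i)$. Part $(ii)$ follows the same way from the homogeneity of $R$, giving $a=0$. The main obstacle is the valuation bookkeeping in the claim: one must track the extra power of $p$ contributed by $\ord_p(d)=1$ at each index $k$ and the fact that the two extreme terms of each sum carry different binomial valuations, so that exactly one term realizes the minimal valuation. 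Once $p\mid a$ and $p\mid b$ are secured at each stage, homogeneity makes the descent automatic, and this streamlines the explicit bootstrapping used in Lemma \ref{lem:imre_i}.
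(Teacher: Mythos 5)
Your proof is correct, and while it runs on the same engine as the paper's --- term-by-term $p$-adic bookkeeping using $p\mid\binom pj$ for $1\le j\le p-1$ and the fact that $d$ squarefree with $p\mid d$ gives $\ord_p(d)=1$ --- it closes the argument by a genuinely different route. Your first step is the paper's first step verbatim: reducing $S(a,b)=0$ modulo $p^2$ (each term with $1\le k\le m-1$ is divisible by $p^{k+1}$, and the $k=m$ term by $p^m$ with $m\ge2$) isolates $pa^{p-1}$ and forces $p\mid a$. From there the paper defers to the proof of Lemma \ref{lem:imre_i} and runs an iterated bootstrapping: it chains divisibilities to get $p\mid b$, then $p^2\mid b$, then picks the maximal $n$ with $p^n\mid b$ and $p^{n-1}\mid a$ and derives $p^{n+1}\mid b$, contradicting maximality. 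You replace this with two cleaner devices: first, a one-pass isolated-minimum argument --- with $\alpha=\ord_p(a)\ge1$ and $p\nmid b$, the term $b^{p-1}d^m$ has valuation exactly $m$ while $pa^{p-1}$ has valuation $\ge p=2m+1$ and the middle terms have valuation $\ge 1+(p-1-2k)+k=p-k\ge m+2$, so $\ord_p S(a,b)=m<\infty$ contradicts $S(a,b)=0$ and yields $p\mid b$; second, the observation that $S$ and $R$ are homogeneous of degree $p-1$, so $S(pa_1,pb_1)=p^{p-1}S(a_1,b_1)$ and your claim iterates, giving $p^n\mid b$ for every $n$ and hence $b=0$. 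The homogeneity descent buys you a structural shortcut that eliminates the paper's exponent bookkeeping entirely, and your mirrored treatment of $R$ (unique minimum at $a^{p-1}$ of valuation $0$ to get $p\mid a$, then at $pb^{p-1}d^m$ of valuation $m+1<2m\le(p-1)\alpha$ to get $p\mid b$) checks out for the same reasons. Two small remarks: when you set $\alpha=\ord_p(a)$ you tacitly assume $a\neq0$, but nothing is lost since $a=0$ gives $S(0,b)=b^{p-1}d^m\neq0$ outright (and $R(0,b)=pb^{p-1}d^m\neq0$); and your infinite descent takes place entirely in $\Z$, so it does not conflict with the paper's stated avoidance of descent techniques, which concerns only the possibly non-UFD ring $\OO_K$.
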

\begin{proof}
Let $a+b\sqrt{d}$ be in $K$ with $a$, $b$ in $\Z$
\begin{enumerate}[$(i)$]
\item When $Im\left((a+b\sqrt{d})^p\right)=0$ with $b\neq 0$ and $p=2m+1$ for some $m\in\NN$, then
\[
b\left(b^{p-1}d^m+pa^{p-1}+\sum_{k=1}^{m-1} \binom p {2k+1} a^{p-(2k+1)}b^{2k}d^{k}\right)=0
\]
Since $b\neq 0$, we have that
\begin{align*}
-b^{p-1} d^m&=pa^{p-1}+\sum_{k=1}^{m-1} \binom p {2k+1} a^{p-(2k+1)}b^{2k}d^{k}
\end{align*}
and observe $p$ divides $\binom p {2k+1}$ for $k\in \lbrace 1,2,\ldots,m-1\rbrace$. Then, for each term of sum, we have that
\[
p^{k+1}\mbox{ divides } \binom p {2k+1} a^{p-(2k+1)}b^{2k}d^{k}
\]
Moreover, we have $p^m\vert b^{p-1}d^m$, thus $p^2 \vert pa^{p-1}$, so $p\vert a$. From here on, the proof is followed as we did in $(i)$ at Lemma \ref{lem:imre_i}.

\item When $Re\left((a+b\sqrt{d})^p\right)=0$, we have
\[
a\left(a^{p-1}+pb^{p-1}d^m+\sum_{k=1}^{m-1} \binom p {2k} a^{p-(2k+1)}b^{2k}d^{k}\right)=0
\]
Suppose $a\neq 0$, then
\begin{align*}
-a^{p-1}&=pb^{p-1}d^m+\sum_{k=1}^{m-1} \binom p {2k} a^{p-(2k+1)}b^{2k}d^{k}
\end{align*}
and observe $p$ divides $\binom p {2k}$ for  $k\in \lbrace 1,2,\ldots,m-1\rbrace$. Hence, for each term of sum, we have that
\[
p^{k+1}\mbox{ divides } \binom p {2k} a^{p-(2k+1)}b^{2k}d^{k}
\]
and observe  $p^{m+1}\vert pb^{p-1}d^m$. Thus $p^2 \vert a^{p-1}$, so $p\vert a$. From here on, the proof is followed as we did in $(ii)$ at Lemma \ref{lem:imre_i}.
\end{enumerate}
\end{proof}

\begin{theorem}\label{thm:imaginarypartzero}
Let $a+b\sqrt{d}$ be in $K$, $d$ be a squarefree integer, and $p$ be a prime number in $\Z$, then
\begin{enumerate}[$(a)$]
\item When $p=2$,
\begin{enumerate}[$(i)$]
\item $Im\left((a+b\sqrt{d})^2\right)=0$, then either $a=0$ or $b=0$
\item $Re\left((a+b\sqrt{d})^2\right)=0$ if and only if $a=\pm b$ and $d=-1$
\end{enumerate}
\item When $p=3$,
\begin{enumerate}[$(i)$]
\item $Im\left((a+b\sqrt{d})^3\right)=0$ and $b\neq 0$ if and only if $a=\pm b$ and $d=-3$
\item $Re\left((a+b\sqrt{d})^3\right)=0$ and $a\neq 0$ if and only if $a=\pm 3b$ and $d=-3$
\end{enumerate}
\item Otherwise,
\begin{enumerate}[$(i)$]
\item $Im\left((a+b\sqrt{d})^p\right)=0$ if and only if $b=0$
\item $Re\left((a+b\sqrt{d})^p\right)=0$ if and only if $a=0$
\end{enumerate}
\end{enumerate}
\end{theorem}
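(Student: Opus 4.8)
The plan is to deduce the theorem from the four lemmas already proved, the only additional work being to drop the integrality hypothesis $a,b\in\Z$ assumed in those lemmas and to check that the case division for $p>3$ is exhaustive.

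First I would reduce to the case of integer coefficients. Given $a+b\sqrt{d}\in K$ with $a,b\in\Q$, choose a common denominator $N\in\NN$ so that $Na$ and $Nb$ lie in $\Z$. Then
\[
(a+b\sqrt{d})^p=N^{-p}\bigl((Na)+(Nb)\sqrt{d}\bigr)^p,
\]
and since $N^{-p}$ is a nonzero rational it factors out of both the real and the imaginary part. Hence $Re((a+b\sqrt{d})^p)=0$ if and only if $Re(((Na)+(Nb)\sqrt{d})^p)=0$, and the same holds for $Im$. Each algebraic condition appearing in the conclusions---$a=0$, $b=0$, $a=\pm b$, $a=\pm 3b$, together with the side conditions $a\neq 0$ and $b\neq 0$---is homogeneous of degree one in $(a,b)$, hence invariant under multiplication by $N$, while the conditions $d=-1$ and $d=-3$ do not involve $a,b$ at all. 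Thus every equivalence in the statement holds for the rational pair $(a,b)$ exactly when it holds for the integer pair $(Na,Nb)$, and it suffices to prove the theorem under the extra assumption $a,b\in\Z$.

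Next I would treat the three cases separately. For $p=2$ the statements $(i)$ and $(ii)$ are precisely Lemma \ref{lem:p2}, and for $p=3$ they are precisely Lemma \ref{lem:p3}, so parts $(a)$ and $(b)$ follow at once. For $p>3$ I would use that $d$ is squarefree, so $p^2\nmid d$ and therefore exactly one of $\gcd(p,d)=1$ or $\gcd(p,d)=p$ holds. In the first case the claim is Lemma \ref{lem:imre_i} and in the second it is Lemma \ref{lem:imre_ii}; together they give part $(c)$.

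The only genuine subtlety is the exhaustiveness of the dichotomy $\gcd(p,d)\in\{1,p\}$ for $p>3$, which is exactly the point at which squarefreeness of $d$ is needed: if $p^2\mid d$ were allowed, neither Lemma \ref{lem:imre_i} nor Lemma \ref{lem:imre_ii} would apply and a separate argument would be required. Once the scaling reduction has been carried out, everything else is a matter of quoting the appropriate lemma.
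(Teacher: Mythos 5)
Your proposal is correct and is essentially the paper's own proof: the paper likewise clears denominators, writing $(a+b\sqrt{d})^p=(a_1b_2+a_2b_1\sqrt{d})^p/(a_2b_2)^p$, and then quotes Lemmata \ref{lem:p2}, \ref{lem:p3}, \ref{lem:imre_i} and \ref{lem:imre_ii}, just as you do with your common denominator $N$ and the homogeneity observation. One small aside in your write-up is off, though it does not affect the argument: the dichotomy $\gcd(p,d)\in\{1,p\}$ is exhaustive for \emph{any} integer $d$ simply because $p$ is prime; squarefreeness of $d$ is needed not for the case split but as a hypothesis inside the lemmas themselves (e.g.\ to force $d=-1$ or $d=-3$ when $p=2,3$, and in the divisibility descent of Lemma \ref{lem:imre_ii}).
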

\begin{proof}
Let $a+b\sqrt{d}$ be in $K$. Since $a$ and $b$ are in $\Q$, then we can rake $a=a_1/a_2$ and $b=b_1/b_2$ with $a_1,a_2,b_1,b_2\in\Z$ such that $\gcd(a_1,a_2)=\gcd(b_1,b_2)=1$, $a_2b_2\neq 0$. Thus
\begin{align*}
(a+b\sqrt{d})^p&=\dfrac{(a_1b_2+a_2b_1\sqrt{d})^p}{(a_2b_2)^p}
\end{align*}
When $Im\left((a+b\sqrt{d})^p\right)=0$, there exists $q$ in $\Q$ such that
\[
(a_1b_2+a_2b_1\sqrt{d})^p=q(a_2b_2)^p\in\Z
\]
Similarly, when $Re\left((a+b\sqrt{d})^p\right)=0$, there exists $q$ in $\Q$ such that
\[
(a_1b_2+a_2b_1\sqrt{d})^p=q(a_2b_2)^p\sqrt{d}
\]
with $q(a_2b_2)^p\in\Z$. By applying lemmata \ref{lem:p2}, \ref{lem:p3}, \ref{lem:imre_i}, and \ref{lem:imre_ii} to $a_1b_2+a_2b_1\sqrt{d}$ we conclude the proof.
\end{proof}

\section{Points and primitive solutions} 
Let $Y^2 = X^p+\alpha$ be a hyperelliptic curve over $K$ for $p>3$ prime, $\alpha$ a rational number, and consider a point $(X,Y)$ on such a hyperelliptic curve. We can classify all of these points depending on whether $X$ is in $\Q$ or $K\setminus\Q$. 

\begin{proposition}\label{prop:ifandonlyif}
Let $(X,Y)$ be a point on $Y^2=X^p+\alpha$ with $\alpha$ in $\Q$. Then 
\begin{enumerate}[$(a)$]
\item $X$ is in $\Q$ if and only if either $Im(Y)=0$ or $Re(Y)=0$
\item $X=a+b\sqrt{d}$ with $b\neq 0$ if and only if $Y=m+n\sqrt{d}$ with $mn\neq 0$
\end{enumerate}
\end{proposition}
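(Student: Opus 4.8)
The plan is to write $X = a + b\sqrt{d}$ and $Y = m + n\sqrt{d}$ with $a,b,m,n \in \Q$, and then simply to compare imaginary parts on the two sides of the defining equation $Y^2 = X^p + \alpha$. Since $\alpha \in \Q$ contributes nothing to the imaginary part, and since $Y^2 = (m^2 + dn^2) + 2mn\sqrt{d}$ gives $Im(Y^2) = 2mn$, the curve equation collapses to the single scalar identity $2mn = Im(X^p)$. This one relation, combined with Theorem \ref{thm:imaginarypartzero}(c), does all the work.

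For part $(a)$ the forward implication is immediate: if $X$ is in $\Q$ then $b=0$, so $X^p$ is rational, $Im(X^p)=0$, and hence $2mn=0$, giving $m=0$ or $n=0$, i.e. $Re(Y)=0$ or $Im(Y)=0$. For the converse I would assume $Re(Y)=0$ or $Im(Y)=0$; in either case $mn=0$, so the identity forces $Im(X^p)=2mn=0$. Then I would invoke Theorem \ref{thm:imaginarypartzero}(c)(i) — applicable since $p>3$ — to conclude $b=0$, that is, $X \in \Q$. Note that no separate analysis of the real parts is needed; both sub-cases funnel through the same implication $Im(X^p)=0 \Rightarrow b=0$.

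Part $(b)$ then requires no new computation, as it is exactly the negation of the biconditional in $(a)$. Indeed $X = a+b\sqrt{d}$ with $b\neq 0$ is precisely the statement $X \notin \Q$, and negating ``$Im(Y)=0$ or $Re(Y)=0$'' yields ``$Im(Y)\neq 0$ and $Re(Y)\neq 0$'', i.e. $n\neq 0$ and $m\neq 0$, which is $mn\neq 0$. So $(b)$ follows formally from $(a)$.

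The only genuinely substantive ingredient is Theorem \ref{thm:imaginarypartzero}(c)(i), which packages the delicate $p$-adic valuation argument carried out in Lemmas \ref{lem:imre_i} and \ref{lem:imre_ii}; everything else is routine bookkeeping with real and imaginary parts. The main point to be careful about is therefore not the algebra here but the applicability of that theorem: I must check that it is used with $a,b$ only rational (which is fine, since it is stated over $K$) and that the dichotomy $\gcd(p,d)\in\{1,p\}$ forced by $d$ squarefree is fully covered, so that the ramified case $p \mid d$ is not accidentally overlooked.
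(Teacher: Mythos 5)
Your proposal is correct and takes essentially the same route as the paper: both directions of part $(a)$ funnel through Theorem \ref{thm:imaginarypartzero} (rationality of $Y^2$, respectively of $X^p$, forcing the relevant part to vanish), and part $(b)$ is obtained, exactly as in the paper, by formally negating the biconditional in $(a)$. The only cosmetic difference is that you package the forward direction in the explicit identity $2mn=Im(X^p)$ by direct expansion, where the paper instead notes $Y^2\in\Q$ and cites the $p=2$ clause of the theorem; the content is identical, and your applicability checks (rational $a,b$ and the dichotomy $\gcd(p,d)\in\lbrace 1,p\rbrace$) are exactly the points the paper's lemmata \ref{lem:imre_i} and \ref{lem:imre_ii} cover.
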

\begin{proof}
~~
\begin{enumerate}[$(a)$]
\item Suppose $X$ is in $\Q$, then $X^p+\alpha$ is also in $\Q$. Thus, $Y^2$ has to be in $\Q$. So, by Theorem $\ref{thm:imaginarypartzero}$, we have either $Im(Y)=0$ or $Re(Y)=0$. On the other hand, suppose $Im(Y)=0$ or $Re(Y)=0$, then $Y^2$ and $Y^2-\alpha$ are also in $\Q$, thus $X^p$ is in $\Q$. Finally, by Theorem $\ref{thm:imaginarypartzero}$, we conclude that $X$ is in $\Q$.
\item By the previous case, we have that $X$ is in $K\setminus\Q$ if and only if $Im(Y)\neq 0$ and $Re(Y)\neq 0$. In other words, we have $X=a+b\sqrt{d}$ with $b\neq 0$ if and only if $Y=m+n\sqrt{d}$ with $mn\neq 0$.
\end{enumerate}
\end{proof}

\subsection{Types of solutions on our Diophantine equation}

We now analyse the solutions we could have on our Diophantine equation and determine whether a point on our hyperelliptic curve comes from a primitive solution.

\begin{proposition}\label{thm:multiplosracionales}
Let $(x,y,z)$ be a nontrivial solution in $K$ to $Ax^p+By^p+Cz^p=0$ such that $y = \gamma x$ with $\gamma\in\Q$. Then, there exists a primitive solution $(\pm \delta_2,\gamma_1,\pm\delta_1)$ in $\Z$ to $Ax^p+By^p+Cz^p=0$.
\end{proposition}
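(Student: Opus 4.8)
The plan is to first exploit the hypothesis $y=\gamma x$ to force \emph{every} ratio among $x$, $y$, $z$ to be rational, and only then to clear denominators and strip common factors. First I would substitute $y=\gamma x$ into $(\ref{eq:00})$ to get $(A+B\gamma^p)x^p=-Cz^p$. Since $C\neq 0$ (it is $p$th powerfree) and $\gamma\in\Q$, this exhibits $(z/x)^p=-(A+B\gamma^p)/C$ as an element of $\Q$. Writing $z/x=a+b\sqrt d$ with $a,b\in\Q$, the vanishing of $Im\!\left((z/x)^p\right)$ together with Theorem \ref{thm:imaginarypartzero}$(c)$ (applicable because $p>3$, covering both $\gcd(p,d)=1$ and $p\mid d$) forces $b=0$, so $\delta:=z/x\in\Q$. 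Hence $(x,y,z)=x\,(1,\gamma,\delta)$ is a rational scalar multiple of a rational triple.

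Next I would normalise to integers. Write $\gamma=\gamma_1/\gamma_2$ and $\delta=\delta_1/\delta_2$ in lowest terms with $\gamma_2,\delta_2>0$. Dividing $(\ref{eq:00})$ by $x^p\neq 0$ gives $A+B\gamma^p+C\delta^p=0$, and clearing denominators yields the integer identity $A\gamma_2^p\delta_2^p+B\gamma_1^p\delta_2^p+C\delta_1^p\gamma_2^p=0$. Reducing modulo $\gamma_2^p$ and using $\gcd(\gamma_1,\gamma_2)=1$ shows $\gamma_2^p\mid B\delta_2^p$, and symmetrically modulo $\delta_2^p$ one gets $\delta_2^p\mid C\gamma_2^p$. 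Comparing $\ell$-adic valuations prime by prime, and using that $B$ and $C$ are $p$th powerfree (so $v_\ell(B),v_\ell(C)\le p-1$), forces $v_\ell(\gamma_2)=v_\ell(\delta_2)$ for every prime $\ell$, hence $\gamma_2=\delta_2$. Consequently $x:y:z=\delta_2:\gamma_1:\delta_1$, so after adjusting the overall sign (legitimate since $p$ is odd) the triple $(\pm\delta_2,\gamma_1,\pm\delta_1)$ is an integer solution of $(\ref{eq:00})$.

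Finally I would verify primitivity in the sense of Definition \ref{def:onlyone}. From $\gcd(\gamma_1,\gamma_2)=1=\gcd(\delta_1,\delta_2)$ and $\gamma_2=\delta_2$ one reads off at once that $\delta_2$ is coprime to both $\gamma_1$ and $\delta_1$. For the remaining pair, suppose a prime $\ell$ divided both $\gamma_1$ and $\delta_1$; then $\ell\nmid\delta_2$, and since $\ell^p\mid B\gamma_1^p$ and $\ell^p\mid C\delta_1^p$, the relation $A\delta_2^p+B\gamma_1^p+C\delta_1^p=0$ gives $\ell^p\mid A\delta_2^p$, whence $\ell^p\mid A$, contradicting that $A$ is $p$th powerfree. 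Thus $\gcd(\gamma_1,\delta_1)=1$ and the triple is pairwise coprime.

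I expect the main obstacle to be the normalisation step. The clean route to the stated shape $(\pm\delta_2,\gamma_1,\pm\delta_1)$ hinges on the non-formal identity $\gamma_2=\delta_2$, whose proof rests essentially on the $p$th powerfreeness of $B$ and $C$ via the valuation comparison; the primitivity argument then recycles the same hypothesis for $A$. By contrast, the reduction in the first step, though it is the conceptual heart, is routine once Theorem \ref{thm:imaginarypartzero} is available.
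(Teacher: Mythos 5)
Your proof is correct and follows essentially the same route as the paper: substitute $y=\gamma x$, invoke Theorem \ref{thm:imaginarypartzero} to conclude $z/x=\delta\in\Q$, clear denominators to obtain $A\gamma_2^p\delta_2^p+B\gamma_1^p\delta_2^p+C\gamma_2^p\delta_1^p=0$, and compare valuations prime by prime (the paper's ``maximal power of $q$'' argument is exactly your $v_\ell(\gamma_2)=v_\ell(\delta_2)$ comparison) using the $p$th powerfreeness of $B$ and $C$ to force $\gamma_2=\pm\delta_2$. Your only genuine addition is the explicit check that $\gcd(\gamma_1,\delta_1)=1$ via the $p$th powerfreeness of $A$ --- a primitivity verification the paper asserts without proof, so this is a welcome completion rather than a divergence.
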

\begin{proof}
Let $(x,y,z)$ be a nontrivial solution to $(\ref{eq:00})$ in $K$ with $y=\gamma x$ and $\gamma\in\Q$. Observe that $\gamma=\gamma_1/\gamma_2$, $\gamma_2\neq 0$ and $\gcd(\gamma_1,\gamma_2)=1$, which means $(x,\gamma x,z)$ satisfies
\begin{align*}
Ax^p+B\gamma^p x^p+Cz^p&=0\\
(A+B\gamma^p)+C\dfrac{z^p}{x^p}&=0\\
\left(\dfrac{z}{x}\right)^p&=\dfrac{A+\gamma^p}{-C}\in\Q 
\end{align*}
By Theorem \ref{thm:imaginarypartzero}, we have that $\dfrac{z}{y}=\delta$ for some $\delta\in\Q$ with $\delta=\delta_1/\delta_2$, $\delta_2\neq 0$ and $\gcd(\delta_1,\delta_2)=1$, so $z=\delta x$ and we have that $(x,\gamma x,\delta x)$ satisfies
\begin{align*}
Ax^p+B\gamma^p x^p+C \delta^p x^p&=0\\
A+B\gamma^p+C\delta^p &=0\\
A\gamma_2^p\delta_2^p+B\gamma_1^p\delta_2^p+C\gamma_2^p\delta_1^p&=0
\end{align*}
Thus, $(\gamma_2\delta_2, \gamma_1\delta_2,\gamma_2\delta_1)$ is a nontrivial solution in $\Z$ to $(\ref{eq:00})$. 
\\
Now, for each prime integer $q$ such that $q^\alpha$ is the maximum power of $q$ dividing $\gamma_2$, we have that $q^{\alpha p}\vert B\gamma_1^p\delta_2^p$. Since $B$ is a $p$th powerfree integer and $\gcd(\gamma_1,\gamma_2)=1$, then $q^{(\alpha-1)p+1}\vert \delta_2^p$ and thus $q^\alpha\vert \delta_2$. On the other hand, let $q^\beta$ be the maximum power of $q$ dividing $\delta_2$, then $q^{\beta p}\vert C\gamma_2^p\delta_1^p$. Since $C$ is a $p$th powerfree integer and $\gcd(\delta_1,\delta_2)=1$, then $q^{(\beta-1)p+1}\vert \gamma_2^p$ and thus $q^\beta\vert \gamma_2$. Due to the maximality of $\alpha$ and $\beta$ we get that $\alpha=\beta$. In this way, we can conclude $\gamma_2=\pm \delta_2$. Therefore $(\pm\delta_2,\gamma_1,\pm\delta_1)$ is a primitive solution in $\Z$ for  $(\ref{eq:00})$.
\end{proof}

\begin{remark}
Particularly, when $(x,y,z)$ is a nontrivial solution in $\OO_K$ to $(\ref{eq:00})$, we will have that
\begin{align*}
y &= \dfrac{\gamma_1}{\gamma_2}(x_1+x_2\omega_d)\\
&=\dfrac{\gamma_1 x_1}{\gamma_2}+\dfrac{\gamma_1 x_2}{\gamma_2}\omega_d
\end{align*}
is still in $\OO_K$ for some $x_1,x_2\in\Z$. Since $\gcd(\gamma_1,\gamma_2)=1$, then $\gamma_2\vert x_1$ and $\gamma_2\vert x_2$, so we define
\begin{align*}
x_{\gamma} &= \dfrac{x_1}{\gamma_2}+\dfrac{x_2}{\gamma_2}\omega_d
\end{align*}
which again remains in $\OO_K$. Similarly, since $\gcd(\delta_1,\delta_2)=1$, we can define
\begin{align*}
x_\delta &=  \dfrac{x_1}{\delta_2}+\dfrac{x_2}{\delta_2}\omega_d
\end{align*}
where $x_\delta$ is in $\OO_K$. These two equations imply that $x=\gamma_2 x_\gamma$ and $x=\delta_2 x_\delta$, but recall $\gamma_2=\pm\delta_2$, then $(\pm \delta_2x_\gamma,\gamma_1x_\gamma,\pm \delta_1x_\gamma)$ is a nontrivial solution in $\OO_K$, and in order to be a primitive solution in $\OO_K\backslash \Z$, we have to have that $x_\gamma$ is a unit in $\OO_K$ different from $\pm 1$.

Furthermore, we know that for $d<0$, we get three cases to analyse:
\begin{enumerate}[$(a)$]
\item For $d=-1$ we get $x_\gamma\in\lbrace \pm i\rbrace$
\item For $d=-3$ we get $x_\gamma\in\lbrace \pm \omega,\pm\omega^2\rbrace$, with $\omega$ a cubic root of the unit.
\item For $d\neq -1,-3$ we get no other but $x_\gamma\in\lbrace \pm 1\rbrace$
\end{enumerate}
Finally, for $d>0$, we get infinitely many units defined as powers of a fundamental unit $u$, so $x_\gamma=u^n$ for any $n\in\NN$.
\end{remark}

\begin{lemma}\label{lem:contra}
Let $(x,y,z)$ be a solution in $K$ to $Ax^p+By^p+Cz^p=0$ such that $xyz=0$, then $(x,y,z)$ is $(0,0,0)$ when $A$, $B$, $C$ are pairwise coprime and $AB\neq \pm 1$, and $(x,y,z)$ is $(\pm 1,1,0)$ when $AB=\pm 1$.
\end{lemma}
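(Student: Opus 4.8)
The plan is to dispose of the degenerate subcases first and then reduce everything to a statement about a rational $p$th root. If at least two of $x,y,z$ vanish, the single surviving term of $(\ref{eq:00})$ has the shape $Aw^p$ (resp. $Bw^p$, $Cw^p$) with a nonzero coefficient, forcing the third coordinate to vanish as well, so $(x,y,z)=(0,0,0)$. Hence I would assume exactly one coordinate is zero. The three possibilities are interchangeable after permuting the roles of $(A,B,C)$, so without loss of generality I take $z=0$ with $x,y\neq 0$ (the vanishing of $x$ or of $y$ is treated identically and produces the analogous conditions on the pairs $BC$ and $AC$). In this case $(\ref{eq:00})$ reads $Ax^p=-By^p$, and setting $w:=x/y\in K$ gives
\[
w^p=-\dfrac{B}{A}\in\Q .
\]

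The key step is to show this forces $w\in\Q$. Writing $w=a+b\sqrt d$ with $a,b\in\Q$, the relation $w^p\in\Q$ is exactly the statement $Im(w^p)=0$. Since $p>3$, I can invoke Theorem \ref{thm:imaginarypartzero}$(c)(i)$, which yields $b=0$, i.e. $w\in\Q$. This is the heart of the argument and the point where all the earlier work on real and imaginary parts is spent; the hypothesis $p>3$ is essential, since the exceptional conclusions of parts $(a)$ and $(b)$ (with $d=-1$ or $d=-3$) would otherwise permit genuinely irrational ratios with rational $p$th power.

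Once $w\in\Q$, I would write $w=r/s$ in lowest terms over $\Z$, so that $Ar^p=-Bs^p$ with $\gcd(r,s)=1$. Then $s^p\mid Ar^p$ together with $\gcd(r^p,s^p)=1$ gives $s^p\mid A$, and symmetrically $r^p\mid B$. Because $A$ and $B$ are $p$th powerfree, any prime factor $\ell$ of $s$ (resp. of $r$) would give $\ell^p\mid A$ (resp. $\ell^p\mid B$), a contradiction; hence $r,s\in\{\pm 1\}$ and $w=\pm 1$. Substituting back, $-B/A=w^p=\pm 1$, so $B=\mp A$, and the coprimality of $A$ and $B$ forces $A,B\in\{\pm 1\}$, that is $AB=\pm 1$. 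Read contrapositively, if $AB\neq\pm 1$ then no nonzero ratio $w$ exists and the only solution with $xyz=0$ is $(0,0,0)$; when $AB=\pm 1$ we have $x=\pm y$ and $z=0$, so the solution is a scalar multiple of $(\pm 1,1,0)$, with $(\pm 1,1,0)$ the primitive representative recorded in the statement.

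The main obstacle is the second step, ruling out an irrational $w$ whose $p$th power is rational: this is not elementary and rests entirely on Theorem \ref{thm:imaginarypartzero}. Everything else is the standard $p$th-powerfree and coprimality bookkeeping, which is carried out without any descent and requires only care with signs so as to conclude $AB=\pm 1$ rather than merely $|B/A|=1$.
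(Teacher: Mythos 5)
Your proof is correct, but it diverges from the paper's at the decisive step. The paper never leaves the rational numbers: from $Ax^p=-By^p$ it forms $A/B=-y^p/x^p$ and applies the norm map $N\colon K\to\Q$, getting $A^2/B^2=\left(N(y)/N(x)\right)^p$, so that $\sqrt[p]{A^2/B^2}\in\Q$ contradicts $A,B$ being $p$th powerfree unless $AB=\pm 1$. You instead set $w=x/y$, observe $w^p=-B/A\in\Q$, and invoke Theorem \ref{thm:imaginarypartzero}$(c)(i)$ to force $w\in\Q$ before doing the lowest-terms bookkeeping in $\Z$. Each route buys something: the paper's norm argument is self-contained and would work for any odd $p$, but it only controls $N(x),N(y)$, and its final assertion that $AB=\pm 1$ implies $x=\pm y$ is stated without justification --- that step implicitly needs $K$ to contain no nontrivial $p$th roots of unity, which for $p>3$ is exactly the content of the theorem you cite; your route establishes $x/y\in\Q$ first, so $x=\pm y$ follows honestly, at the cost of making the lemma depend on the heavier Theorem \ref{thm:imaginarypartzero} and on the hypothesis $p>3$. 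Your handling of the case structure is also slightly more complete than the paper's: the paper's proof treats only $(x,y,0)$ and $(x,0,0)$, silently omitting $(x,0,z)$ and $(0,y,z)$, whereas you note explicitly that those cases yield the analogous conditions on $AC$ and $BC$ (a gap in the lemma's statement itself, which mentions only $AB\neq\pm 1$, rather than in your argument).
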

\begin{proof}
Let $(x,y,z)$ be a solution in $K$ to $(\ref{eq:00})$ such that $xyz=0$,  then we have three cases to analyse:
\begin{enumerate}[$(a)$]
\item When all $x=y=z=0$, then we have the trivial solution 
\item When we have $(x,y,0)$ with $xy\neq 0$, then
\begin{align*}
Ax^p+By^p&=0\\
\dfrac{A}{B}&=-\dfrac{y^p}{x^p}
\end{align*}
Applying the norm on both sides of the equation give us
$$\dfrac{A^2}{B^2}=\dfrac{N(y)^p}{N(x)^p}$$ meaning $\dfrac{N(y)}{N(x)}=\sqrt[p]{\dfrac{A^2}{B^2}}$ lies in $\Q$. This is a clear contradiction since both $A$ and $B$ are $p$th powerfree integers unless $AB=\pm 1$. In particular, when $AB=\pm 1$ we will have that $x=\pm y$ and the solution is the triplet $(\pm y, y,0)$, which is not primitive, but reducible to $(\pm 1, 1, 0)$. This solution is a trivial one.
\item When we have $(x,0,0)$, then $Ax^p=0$ implies $x=0$
\end{enumerate}
\end{proof}

\subsection{Points on Hyperelliptic curves coming from primitive solutions}

Let $(x,y,z)$ be a nontrivial solution in $K$ to $(\ref{eq:00})$, then take $(X,Y)$ to be a point on $(\ref{eq:01})$ given by $(\ref{eq:cambiodevariable})$. In this section, we are determining the necessary conditions for $(x,y,z)$ to be a primitive solution to $(\ref{eq:00})$ depending on whether $Y$ is in $\Q$ or $K\setminus\Q$.

\begin{proposition}\label{thm:finalthm01}
Let $(X,Y)$ be a point on $Y^2 =X^p+(A^2(BC)^{p-1})/4$ where $Y=m+n\sqrt{d}$ with $m,n\in\Q$ given by 
$$m+n\sqrt{d}=\dfrac{(-BC)^{\frac{p-1}{2}}(By^p-Cz^p)}{2x^p}$$
for any solution $(x,y,z)$ in $K$ to $Ax^p+By^p+Cz^p=0$, then
\begin{align}\label{eq:Ymn01}
\dfrac{2m}{(-BC)^{\frac{p-1}{2}}}=B\dfrac{y^p}{x^p}-C\dfrac{\overline{z}^p}{\overline{x}^p}, \quad \dfrac{2n}{(-BC)^{\frac{p-1}{2}}}\sqrt{d}=-C\dfrac{z^p}{x^p}+C\dfrac{\overline{z}^p}{\overline{x}^p}
\end{align}
\end{proposition}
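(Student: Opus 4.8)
The plan is to reduce everything to the single observation that $By^p/x^p + Cz^p/x^p$ is rational, hence fixed under conjugation. Write $\lambda=(-BC)^{\frac{p-1}{2}}$, which is a fixed rational number, and abbreviate $P=By^p/x^p$ and $Q=Cz^p/x^p$, so that their conjugates are $\overline{P}=B\overline{y}^p/\overline{x}^p$ and $\overline{Q}=C\overline{z}^p/\overline{x}^p$. With this notation the defining formula for $Y$ reads simply $2Y/\lambda=P-Q$.

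First I would conjugate this identity. Since $\lambda$, $B$ and $C$ are rational, conjugation commutes with division by $\lambda$ and with multiplication by $B$ and $C$, so $2\overline{Y}/\lambda=\overline{P}-\overline{Q}$. Next, the hypothesis that $(x,y,z)$ solves $(\ref{eq:00})$ gives $Ax^p+By^p+Cz^p=0$; dividing by $x^p$ yields $P+Q=-A\in\Q$. Taking conjugates, $\overline{P}+\overline{Q}=P+Q$. This single linear relation is the engine of the whole argument.

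Then I would extract $m$ and $n$ from $Y=m+n\sqrt{d}$ via $2m=Y+\overline{Y}$ and $2n\sqrt{d}=Y-\overline{Y}$. Dividing by $\lambda$ and inserting the two expressions above,
\[
\frac{2m}{\lambda}=\tfrac{1}{2}\bigl((P-Q)+(\overline{P}-\overline{Q})\bigr),\qquad \frac{2n\sqrt{d}}{\lambda}=\tfrac{1}{2}\bigl((P-Q)-(\overline{P}-\overline{Q})\bigr).
\]
Substituting $\overline{P}=(P+Q)-\overline{Q}$ (equivalently $\overline{P}+\overline{Q}=P+Q$) into the first expression collapses it to $P-\overline{Q}$, which is exactly $B\,y^p/x^p-C\,\overline{z}^p/\overline{x}^p$; substituting the same relation into the second collapses it to $\overline{Q}-Q$, which is exactly $-C\,z^p/x^p+C\,\overline{z}^p/\overline{x}^p$. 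These are precisely the two identities of $(\ref{eq:Ymn01})$.

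The computations are routine once the relation $\overline{P}+\overline{Q}=P+Q$ is in hand, so I do not expect a serious obstacle; the only point requiring care is the bookkeeping of conjugation, namely verifying that $\lambda=(-BC)^{\frac{p-1}{2}}$, $A$, $B$ and $C$ are genuinely rational and hence conjugation-invariant, so that conjugating the formula for $Y$ conjugates only the occurrences of $x$, $y$ and $z$. It is worth emphasising that the apparent asymmetry of the target formulas---each mixing an unconjugated term with a conjugated one---is not accidental but is exactly what the rationality of $P+Q=-A$ forces, and that the argument uses no hypothesis on $BC$, so it holds for \emph{any} solution in $K$.
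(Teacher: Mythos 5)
Your proof is correct and takes essentially the same route as the paper: both arguments conjugate the defining formula for $Y$ and combine it linearly with the Diophantine equation and its conjugate (your relation $P+Q=-A=\overline{P}+\overline{Q}$) to isolate the two components. Your extraction of $m$ and $n$ via $2m=Y+\overline{Y}$ and $2n\sqrt{d}=Y-\overline{Y}$ is simply a tidier packaging of the paper's longer chain of additions and back-substitutions, which manipulates the same four linear relations.
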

\begin{proof}
Let be $$m+n\sqrt{d}=\dfrac{(-BC)^{\frac{p-1}{2}}(By^p-Cz^p)}{2x^p}$$ where $m,n\in\Q$, then
\begin{align}\label{eq:ft01}
\dfrac{2m}{(-BC)^{\frac{p-1}{2}}}+\dfrac{2n}{(-BC)^{\frac{p-1}{2}}}\sqrt{d}=B\dfrac{y^p}{x^p}-C\dfrac{z^p}{x^p}
\end{align}
Since this expression lies in $K$, we can conjugate it to obtain
\begin{align}\label{eq:2mbc+2nbc}
\dfrac{2m}{(-BC)^{\frac{p-1}{2}}}-\dfrac{2n}{(-BC)^{\frac{p-1}{2}}}\sqrt{d}=B\dfrac{\overline{y}^p}{\overline{x}^p}-C\dfrac{\overline{z}^p}{\overline{x}^p}
\end{align}
Moreover, we can multiply $(\ref{eq:2mbc+2nbc})$ by $-1$ to get 
\begin{align}\label{eq:ft02}
-\dfrac{2m}{(-BC)^{\frac{p-1}{2}}}+\dfrac{2n}{(-BC)^{\frac{p-1}{2}}}\sqrt{d}=-B\dfrac{\overline{y}^p}{\overline{x}^p}+C\dfrac{\overline{z}^p}{\overline{x}^p}
\end{align}
Recall $(x,y,z)$ is a solution to $(\ref{eq:00})$, then
\begin{align}\label{eq:ft03}
A=-B\dfrac{y^p}{x^p}-C\dfrac{z^p}{x^p}
\end{align}
and
\begin{align}\label{eq:ft04}
A=-B\dfrac{\overline{y}^p}{\overline{x}^p}-C\dfrac{\overline{z}^p}{\overline{x}^p}
\end{align}
Adding equation $(\ref{eq:ft01})$ to $(\ref{eq:ft03})$, we get
\begin{align}\label{eq:ft05}
A+\dfrac{2m}{(-BC)^{\frac{p-1}{2}}}+\dfrac{2n}{(-BC)^{\frac{p-1}{2}}}\sqrt{d}=-2C\dfrac{z^p}{x^p}
\end{align}
and adding equation $(\ref{eq:ft02})$ to $(\ref{eq:ft04})$, we get
\begin{align}\label{eq:ft06}
A-\dfrac{2m}{(-BC)^{\frac{p-1}{2}}}+\dfrac{2n}{(-BC)^{\frac{p-1}{2}}}\sqrt{d}=-2B\dfrac{\overline{y}^p}{\overline{x}^p}
\end{align}
Furthermore, when adding $(\ref{eq:ft05})$ to $(\ref{eq:ft06})$, we get that
\begin{align*}
2A+2\dfrac{2n}{(-BC)^{\frac{p-1}{2}}}\sqrt{d}&=-2C\dfrac{z^p}{x^p}-2B\dfrac{\overline{y}^p}{\overline{x}^p}\\\nonumber
A+\dfrac{2n}{(-BC)^{\frac{p-1}{2}}}\sqrt{d}&=-C\dfrac{z^p}{x^p}-B\dfrac{\overline{y}^p}{\overline{x}^p}\\\nonumber
\dfrac{2n}{(-BC)^{\frac{p-1}{2}}}\sqrt{d}&=-C\dfrac{z^p}{x^p}-B\dfrac{\overline{y}^p}{\overline{x}^p}-A
\end{align*}
and by $(\ref{eq:ft04})$, we get that
\begin{align}\label{eq:2nbc}
\dfrac{2n}{(-BC)^{\frac{p-1}{2}}}\sqrt{d}=-C\dfrac{z^p}{x^p}+C\dfrac{\overline{z}^p}{\overline{x}^p}
\end{align}
Finally, we substitute $(\ref{eq:2nbc})$ into $(\ref{eq:ft01})$ to get
\begin{align}\label{eq:2mbc}
\dfrac{2m}{(-BC)^{\frac{p-1}{2}}}=B\dfrac{y^p}{x^p}-C\dfrac{\overline{z}^p}{\overline{x}^p}
\end{align}
\end{proof}

\begin{theorem}\label{thm:finalthm00}
Let $(X,Y)$ be a point on $Y^2 =X^p+(A^2(BC)^{p-1})/4$ where $Y=m+n\sqrt{d}$ is coming from a nontrivial solution $(x,y,z)$ in $K$ to $Ax^p+By^p+Cz^p=0$, then $mn=0$.
\end{theorem}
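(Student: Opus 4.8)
The plan is to prove that one of $m,n$ must vanish, starting from the explicit descriptions in Proposition~\ref{thm:finalthm01} and Theorem~\ref{thm:imaginarypartzero}. Writing $u=y/x$ and $v=z/x$, the second identity of $(\ref{eq:Ymn01})$ reads $\frac{2n}{(-BC)^{\frac{p-1}{2}}}\sqrt{d}=C\big(\overline{v}^{p}-v^{p}\big)=-2C\,Im(v^{p})\sqrt{d}$, so $n=0$ precisely when $Im\big((z/x)^{p}\big)=0$, which by Theorem~\ref{thm:imaginarypartzero}(c) (as $p>3$) holds if and only if $z/x\in\Q$. Comparing imaginary parts of $A+Bu^{p}+Cv^{p}=0$ gives $B\,Im(u^{p})+C\,Im(v^{p})=0$, so $n=0$ is equivalent to $y/x\in\Q$ as well. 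I may therefore assume $y/x,\,z/x\notin\Q$ and aim to show $m=0$.

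The second step is to put the quantity governing $m$ in closed form. The first identity of $(\ref{eq:Ymn01})$ gives $\frac{2m}{(-BC)^{\frac{p-1}{2}}}=\frac{\Delta}{N(x)^{p}}$, where $\Delta:=By^{p}\overline{x}^{p}-Cx^{p}\overline{z}^{p}$ is rational because $m$, $(-BC)^{\frac{p-1}{2}}$ and $N(x)$ are. To evaluate it, I multiply $(\ref{eq:00})$ by $\overline{x}^{p}$ to obtain $By^{p}\overline{x}^{p}=-A\,N(x)^{p}-Cz^{p}\overline{x}^{p}$, whence $\Delta=-A\,N(x)^{p}-2C\,Re(z^{p}\overline{x}^{p})$; then I take the norm of $By^{p}=-(Ax^{p}+Cz^{p})$ and use multiplicativity of $N$ to get $2AC\,Re(z^{p}\overline{x}^{p})=B^{2}N(y)^{p}-A^{2}N(x)^{p}-C^{2}N(z)^{p}$. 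Substituting produces the clean identity
\[
By^{p}\overline{x}^{p}-Cx^{p}\overline{z}^{p}=\frac{C^{2}N(z)^{p}-B^{2}N(y)^{p}}{A},
\]
so that $m=0$ if and only if $B^{2}N(y)^{p}=C^{2}N(z)^{p}$.

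Finally I would argue by contradiction: suppose $m\neq0$ and $n\neq0$. The first step gives $z/x\notin\Q$, and the second gives $B^{2}N(y)^{p}\neq C^{2}N(z)^{p}$, where $N(y),N(z)\neq0$ since $xyz\neq0$. The contradiction has to be squeezed out of the arithmetic of the solution: using $\gcd(x,y,z)=1$ together with the norm--$\gcd$ results of Section~3 (for instance Propositions~\ref{prop:primosrelativos01} and~\ref{prop:primosrelativos03}), one analyses the prime factorizations of $y^{p}\overline{x}^{p}$ and $z^{p}\overline{x}^{p}$ to force $B^{2}N(y)^{p}=C^{2}N(z)^{p}$, contradicting $m\neq0$. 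As a consistency check on where this leads, note that the equality $B^{2}N(y)^{p}=C^{2}N(z)^{p}$ with $\gcd(B,C)=1$ and $B,C$ $p$th powerfree can hold only when $BC=\pm1$: writing $N(y)/N(z)=g/h$ in lowest terms, $(g/h)^{p}=C^{2}/B^{2}$ forces $g^{p}=C^{2}$ and $h^{p}=B^{2}$ up to sign, so $p$th powerfreeness gives $B,C\in\{\pm1\}$, exactly the dichotomy appearing in the main theorem.

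The main obstacle is precisely this last arithmetic step. It cannot be extracted from the field relations alone: every relation among $m,n,u,v$ and the curve $(\ref{eq:01})$ is an algebraic consequence of $A+Bu^{p}+Cv^{p}=0$ and is satisfied identically, so arbitrary $u,v\in K$ meeting that single equation need not give $mn=0$. The vanishing of $mn$ is therefore a genuine Diophantine statement rather than a formal one, and all the difficulty lies in deducing $B^{2}N(y)^{p}=C^{2}N(z)^{p}$ (equivalently $m=0$) from the coprimality and $p$th powerfree hypotheses, carried out in $\OO_K$ without presupposing unique factorization.
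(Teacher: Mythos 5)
Your first two steps are correct and check out. The equivalence $n=0\iff Im\left((z/x)^p\right)=0\iff z/x\in\Q\iff y/x\in\Q$ follows exactly as you say from $(\ref{eq:Ymn01})$ and Theorem~\ref{thm:imaginarypartzero}, and your closed form
\begin{equation*}
\dfrac{2m}{(-BC)^{\frac{p-1}{2}}}=\dfrac{C^{2}N(z)^{p}-B^{2}N(y)^{p}}{A\,N(x)^{p}}
\end{equation*}
is a valid identity (I verified the norm computation; it is consistent with the paper's later propositions, where $m=0$ yields $B^{2}N(y)^{p}=C^{2}N(z)^{p}$ and hence $BC=\pm1$). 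But your third step is an announcement of a proof, not a proof. You assert that coprimality plus the norm--$\gcd$ lemmas of Section~3 will ``force'' $B^{2}N(y)^{p}=C^{2}N(z)^{p}$, yet your own consistency check shows that equality is \emph{impossible} when $BC\neq\pm1$; so in that regime what must actually be proved is $n=0$, i.e.\ $z/x\in\Q$, which is essentially the paper's final Corollary itself. Propositions~\ref{prop:primosrelativos01}--\ref{prop:primosrelativos03} are far too weak to deliver this, and no mechanism is sketched that could. The entire content of the theorem is left unproved at this step.

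That said, your meta-diagnosis --- that $mn=0$ cannot be a formal consequence of the single relation $A+B(y/x)^{p}+C(z/x)^{p}=0$ together with conjugation --- is correct, and it is exactly where the paper's own proof breaks down. The paper's argument \emph{is} purely such formal algebra (the exponent $p$ enters only through the atoms $y^{p}/x^{p}$, $z^{p}/x^{p}$), and it contains a sign error: after substituting $(\ref{eq:ft203})$ into $(\ref{eq:ft201})$, the line $-Am=mC\frac{\overline{z}^{p}}{\overline{x}^{p}}+nC\frac{\overline{z}^{p}}{\overline{x}^{p}}\sqrt{d}+mB\frac{y^{p}}{x^{p}}-nB\frac{y^{p}}{x^{p}}\sqrt{d}$ is factored as $C\frac{\overline{z}^{p}}{\overline{x}^{p}}(m+n\sqrt{d})-B\frac{y^{p}}{x^{p}}(m-n\sqrt{d})$, whereas the correct factorization carries $+B\frac{y^{p}}{x^{p}}(m-n\sqrt{d})$. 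With the sign corrected, $(\ref{eq:noname01})$ becomes $AmN(x^{p})=-Cx^{p}\overline{z}^{p}(m+n\sqrt{d})-B\overline{x}^{p}y^{p}(m-n\sqrt{d})$, and adding $(\ref{eq:noname02})$ yields $AN(x^{p})(m-n\sqrt{d})=-\overline{x}^{p}(By^{p}+Cz^{p})(m-n\sqrt{d})=AN(x^{p})(m-n\sqrt{d})$ by $(\ref{eq:00})$ --- a tautology, not $Ax^{p}=By^{p}-Cz^{p}$. Indeed, one can see a priori that the paper's chain of manipulations cannot work: all identities used, including $(\ref{eq:Ymn01})$, are linear in $u=y^{p}/x^{p}$, $v=z^{p}/x^{p}$ and their conjugates, so a valid derivation would prove $mn=0$ for \emph{free} $u,v\in K$ subject only to $A+Bu+Cv=0$; taking $v=\sqrt{d}$ and $u=(-A-C\sqrt{d})/B$ gives $m=-\frac{A}{2}(-BC)^{\frac{p-1}{2}}\neq0$ and $n=-C(-BC)^{\frac{p-1}{2}}\neq0$. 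So the comparison cuts both ways: your attempt is honestly incomplete at its third step, but the paper's proof of Theorem~\ref{thm:finalthm00} does not establish the result either, and your correct partial identities (in particular $m=0\iff B^{2}N(y)^{p}=C^{2}N(z)^{p}$) would be a sound starting point for a genuine repair.
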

\begin{proof}
Suppose there is a nontrivial solution $(x,y,z)$ in $K$ to $(\ref{eq:00})$ such that we have a point on $Y^2 =X^p+(A^2(BC)^{p-1})/4$, where $Y=m+n\sqrt{d}$ and $mn\neq 0$. Take equation $(\ref{eq:2mbc})$ and multiply it by $n$ to get
\begin{align}\label{eq:13timesn}
\dfrac{2mn}{(-BC)^{\frac{p-1}{2}}}=nB\dfrac{y^p}{x^p}-nC\dfrac{\overline{z}^p}{\overline{x}^p}
\end{align}
Furthermore, take equation $(\ref{eq:2nbc})$ and multiply it by $m$ to get 
\begin{align}\label{eq:12timesm}
\dfrac{2mn}{(-BC)^{\frac{p-1}{2}}}\sqrt{d}=-mC\dfrac{z^p}{x^p}+mC\dfrac{\overline{z}^p}{\overline{x}^p}
\end{align}
Then substitute $(\ref{eq:13timesn})$ into $(\ref{eq:12timesm})$ to obtain
\begin{align}\label{eq:ft201}
\left(nB\dfrac{y^p}{x^p}-nC\dfrac{\overline{z}^p}{\overline{x}^p} \right)\sqrt{d}&=-mC\dfrac{z^p}{x^p}+mC\dfrac{\overline{z}^p}{\overline{x}^p}\\\nonumber
nB\dfrac{y^p}{x^p}\sqrt{d}-nC\dfrac{\overline{z}^p}{\overline{x}^p}\sqrt{d}&=-mC\dfrac{z^p}{x^p}+mC\dfrac{\overline{z}^p}{\overline{x}^p}\\\nonumber
mC\dfrac{z^p}{x^p}+nB\dfrac{y^p}{x^p}\sqrt{d}&=mC\dfrac{\overline{z}^p}{\overline{x}^p}+nC\dfrac{\overline{z}^p}{\overline{x}^p}\sqrt{d}
\end{align}
Recall $Ax^p+By^p+Cz^p=0$, so 
\begin{align}\label{eq:ft202}
B\dfrac{y^p}{x^p}=-A-C\dfrac{z^p}{x^p}
\end{align}
and
\begin{align}\label{eq:ft203}
C\dfrac{z^p}{x^p}=-A-B\dfrac{y^p}{x^p}
\end{align}
Now, substitute $(\ref{eq:ft203})$ into $(\ref{eq:ft201})$ to get
\begin{align*}
mC\dfrac{\overline{z}^p}{\overline{x}^p}+nC\dfrac{\overline{z}^p}{\overline{x}^p}\sqrt{d}
&= mC\dfrac{z^p}{x^p}+nB\dfrac{y^p}{x^p}\sqrt{d}\\\nonumber
&= m\left(-A-B\dfrac{y^p}{x^p}\right)+nB\dfrac{y^p}{x^p}\sqrt{d}\\\nonumber
&=-Am-mB\dfrac{y^p}{x^p}+nB\dfrac{y^p}{x^p}\sqrt{d}
\end{align*}
thus
\begin{align*}
-Am 
&= mC\dfrac{\overline{z}^p}{\overline{x}^p}+nC\dfrac{\overline{z}^p}{\overline{x}^p}\sqrt{d}+ mB\dfrac{y^p}{x^p}-nB\dfrac{y^p}{x^p}\sqrt{d}\\
&= C\dfrac{\overline{z}^p}{\overline{x}^p}(m+n\sqrt{d})-B\dfrac{y^p}{x^p}(m-n\sqrt{d})
\end{align*}
Multiplying $-Am$ by $-N(x^p)$ we get
\begin{align}\label{eq:noname01}
AmN(x^p)&=-Cx^p\overline{z}^p(m+n\sqrt{d})+B\overline{x}^py^p(m-n\sqrt{d})
\end{align}
On the other hand, substitute $(\ref{eq:ft202})$ into $(\ref{eq:ft201})$ to get
\begin{align*}
mC\dfrac{\overline{z}^p}{\overline{x}^p}+nC\dfrac{\overline{z}^p}{\overline{x}^p}\sqrt{d}
&= mC\dfrac{z^p}{x^p}+nB\dfrac{y^p}{x^p}\sqrt{d}\\
&= mC\dfrac{z^p}{x^p}+n\left(-A-C\dfrac{z^p}{x^p} \right)\sqrt{d}\\
&= mC\dfrac{z^p}{x^p}-nA\sqrt{d}-nC\dfrac{z^p}{x^p}\sqrt{d}
\end{align*}
thus
\begin{align*}
-An\sqrt{d}
&= mC\dfrac{\overline{z}^p}{\overline{x}^p}+nC\dfrac{\overline{z}^p}{\overline{x}^p}\sqrt{d} -mC\dfrac{z^p}{x^p}+nC\dfrac{z^p}{x^p}\\
&= C\dfrac{\overline{z}^p}{\overline{x}^p}(m+n\sqrt{d})-C\dfrac{z^p}{x^p}(m-n\sqrt{d})
\end{align*}
Multiplying $-An\sqrt{d}$ by $N(x^p)$ we get
\begin{align}\label{eq:noname02}
-AnN(x^p)\sqrt{d}&=Cx^p\overline{z}^p(m+n\sqrt{d})-C\overline{x}^pz^p(m-n\sqrt{d})
\end{align}
Adding equations $(\ref{eq:noname01})$ and $(\ref{eq:noname02})$ together, we get
\begin{align*}
AmN(x^p)-AnN(x^p)\sqrt{d}&= B\overline{x}^py^p(m-n\sqrt{d})-C\overline{x}^pz^p(m-n\sqrt{d})\\
AN(x^p)(m-n\sqrt{d})&=(B\overline{x}^py^p-C\overline{x}^pz^p)(m-n\sqrt{d})\\
Ax^p\overline{x}^p&=B\overline{x}^py^p-C\overline{x}^pz^p\\
Ax^p&=By^p-Cz^p
\end{align*}
which implies $(x,y,z)$ satisfies both equations
\[
Ax^p+By^p+Cz^p=0, \quad Ax^p-By^p+Cz^p=0
\]
so $y=0$. Therefore, by Lemma $\ref{lem:contra}$, we have that $(x,y,z)$ is a trivial solution, which is a contradiction.
\end{proof}

At this point, let $(x,y,z)$ be a nontrivial solution in $K$ to $(\ref{eq:00})$ and take $(X,Y)$ to be a point on $(\ref{eq:01})$ given by $(\ref{eq:cambiodevariable})$. Then, we have for $Y$ that either $Re(Y)=0$ or $Im(Y)=0$. We need to analyse first what happens when $Re(Y)=0$.

\begin{proposition}
Let $(X,Y)$ be a point on $Y^2 =X^p+(A^2(BC)^{p-1})/4$ with $Y=n\sqrt{d}$ coming from a nontrivial solution $(x,y,z)$ in $K$ to $Ax^p+By^p+Cz^p=0$, then $X\in\Q$, $BC=\pm 1$ and $(x,y,z)=(x,u\overline{z},z)$ where $u$ is a unit.
\end{proposition}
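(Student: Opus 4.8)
The plan is to read off the vanishing of the real part of $Y$, feed it into the identities of Proposition \ref{thm:finalthm01}, and then run a norm/powerfree argument analogous to the one in Lemma \ref{lem:contra}. Writing $Y=n\sqrt d$ means $Re(Y)=m=0$, so by Proposition \ref{prop:ifandonlyif}$(a)$ we immediately get $X\in\Q$; this disposes of the first assertion.

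For $BC=\pm1$, I would set $m=0$ in equation \eqref{eq:2mbc}, which then reads $By^p/x^p=C\overline z^{\,p}/\overline x^{\,p}$, equivalently $By^p\overline x^{\,p}=C\overline z^{\,p}x^p$. Taking the norm $N$ of both sides and using $N(B)=B^2$, $N(C)=C^2$, $N(\overline x)=N(x)$, and $N(x)\neq0$ (the solution is nontrivial), one cancels $N(x)^p$ to obtain $B^2N(y)^p=C^2N(z)^p$. Hence $(N(y)/N(z))^p=C^2/B^2$, so $\sqrt[p]{C^2/B^2}=N(y)/N(z)\in\Q$; since $B$ and $C$ are coprime $p$th powerfree integers, the same elementary divisibility argument used in Lemma \ref{lem:contra} forces $B=\pm1$ and $C=\pm1$, i.e. $BC=\pm1$.

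Next I would identify $y$ with $\overline z$ up to a unit. With $B^2=C^2=1$, the displayed relation becomes $(y/\overline z)^p=BC\,(x/\overline x)^p$, that is $\big((y/\overline z)/(x/\overline x)\big)^p=BC=\pm1$. Because $p>3$ is prime, $K$ contains no nontrivial $p$th roots of unity (a primitive one would generate $\Q(\zeta_p)$, of degree $p-1>2$), and since $p$ is odd the only solutions of $w^p=\pm1$ in $K$ are $w=\pm1$. Therefore $y/\overline z=BC\,(x/\overline x)=:u$, giving $y=u\overline z$ and the asserted shape $(x,y,z)=(x,u\overline z,z)$.

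It remains to see that $u$ is genuinely a unit, and this is the step I expect to require the most care because $\OO_K$ need not be a UFD. After reducing to a primitive solution in $\OO_K$ (clearing denominators and common factors, so that $x,y,z$ are pairwise coprime), I would argue as follows. From $B^2=C^2=1$ we have $N(u)=N(y)/N(z)=1$. Moreover the element identity $y\overline x=BC\,x\overline z$ shows $\overline z\mid y\overline x$; since $\gcd(x,z)=1$ gives $\gcd(\overline x,\overline z)=1$ by Proposition \ref{prop:primosrelativos00}, a Euclid-type argument in the Dedekind domain $\OO_K$ yields $\overline z\mid y$, so $u=y/\overline z\in\OO_K$. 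An element of $\OO_K$ of norm $1$ is a unit, its conjugate being its inverse, so $u\in\OO_K^\times$, completing the proof. The only genuinely delicate points are justifying the reduction to a primitive integral solution and the divisibility step without assuming unique factorization; both are supported by the ideal-theoretic coprimality set up in Propositions \ref{prop:primosrelativos00}--\ref{prop:primosrelativos03}.
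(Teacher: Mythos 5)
Your first two assertions track the paper's own proof exactly: you set $m=0$ in \eqref{eq:Ymn01} (equivalently \eqref{eq:2mbc}), obtain $B\overline{x}^py^p=Cx^p\overline{z}^p$, take norms to get $B^2N(y)^p=C^2N(z)^p$, and invoke $p$th-powerfreeness to force $BC=\pm 1$; the paper does precisely this (and the claim $X\in\Q$, which you get from Proposition \ref{prop:ifandonlyif}, is left implicit there). The divergence is in the final step. The paper simply sets $u=y/\overline{z}$ and declares it a unit ``because $N(y)=N(z)$''. You correctly sense that this one-liner is insufficient --- a norm-one element of $K$ need not even lie in $\OO_K$, let alone be a unit --- and you add two new ingredients: the observation that $K$ contains no nontrivial $p$th roots of unity (true, since $[\Q(\zeta_p):\Q]=p-1>2$), which pins down $u=BC\,x/\overline{x}$ exactly, and an integrality argument for $u$. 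The root-of-unity rigidity is correct and in fact yields more information than the paper records.

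The gap is in your integrality step, in three respects. First, the reduction ``clearing denominators and common factors, so that $x,y,z$ are pairwise coprime'' is not available in a general $\OO_K$: when $\OO_K$ is not a principal ideal domain the common content of $(x,y,z)$ is an ideal that need not be principal, so it cannot always be divided out --- the paper itself warns in Section 3 that descent-type manipulations fail outside the UFD case. Second, your Euclid-type step (from $\overline{z}\mid y\overline{x}$ and $\gcd(\overline{x},\overline{z})=1$ deduce $\overline{z}\mid y$) requires coprimality of the \emph{ideals} $(\overline{x})$ and $(\overline{z})$, which is strictly stronger than the paper's notion of gcd (no common irreducible factor): in $\Z[\sqrt{-5}]$ the elements $2$ and $1+\sqrt{-5}$ share no irreducible factor yet lie in a common prime ideal, so Proposition \ref{prop:primosrelativos00} does not license the inference. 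Third --- and this bears on the statement itself, not only on your proof --- the unit claim is not invariant under the scalings that the hypothesis permits: replacing the solution by $(\lambda x,\lambda y,\lambda z)$ with $\lambda\in K^{\times}$ replaces $u=y/\overline{z}$ by $(\lambda/\overline{\lambda})u$, and $\lambda/\overline{\lambda}$ has norm one but is generically not a unit (e.g.\ $(1+2i)/(1-2i)=(-3+4i)/5$ in $\Q(i)$). So your own formula $u=BC\,x/\overline{x}$ shows that for an arbitrary $K$-solution one can only conclude $N(u)=1$; your patch would be valid precisely under a normalization to a primitive triple with ideal-coprime coordinates, a hypothesis that neither the statement nor the paper's one-line argument supplies.
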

\begin{proof}
Suppose $m=0$ in $(\ref{eq:Ymn01})$ for a nontrivial solution $(x,y,z)$ in $K$ to $(\ref{eq:00})$, then
\[
B\dfrac{y^p}{x^p}-C\dfrac{\overline{z}^p}{\overline{x}^p}=0
\]
thus
\begin{align*}
B\overline{x}^py^p &=Cx^p\overline{z}^p\\
\dfrac{B}{C}&=\dfrac{x^p\overline{z}^p}{\overline{x}^py^p}
\end{align*}
Applying the norm on both sides of the equation give us
\begin{align*}
\dfrac{B^2}{C^2}&=\dfrac{N(z)^{p}}{N(y)^{p}}
\end{align*}
so $\sqrt[p]{\left(\dfrac{B^2}{C^2}\right)}=\dfrac{N(z)}{N(y)}\in\Q$, which is a contradiction unless $BC=\pm 1$.\\ 
Now, let be $BC=\pm 1$ and take $\dfrac{y}{\overline{z}}=u$ in $K$. Then $u$ is a unit because $N(y)=N(z)$. So $y = u\overline{z}$ and therefore $(x,y,z)=(x,u\overline{z},z)$ is the solution.
\end{proof}

Now, we analyse what happens when $Im(Y)=0$.

\begin{proposition}
Let $(X,Y)$ be a point on $Y^2 =X^p+(A^2(BC)^{p-1})/4$ with $Y$ in $\Q$, then there exists a primitive solution in $\Z$ to $Ax^p+By^p+Cz^p=0$.
\end{proposition}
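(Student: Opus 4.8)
The plan is to reduce the statement to Proposition \ref{thm:multiplosracionales} by proving that the hypothesis $Y\in\Q$ forces both ratios $z/x$ and $y/x$ to be rational. Write $Y=m+n\sqrt{d}$ with $m,n\in\Q$; since $Y\in\Q$ we have $n=0$, and in particular $Im(Y)=0$, so Proposition \ref{prop:ifandonlyif}$(a)$ confirms $X\in\Q$. The real input, however, is the second relation of $(\ref{eq:Ymn01})$ furnished by Proposition \ref{thm:finalthm01}.

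First I would substitute $n=0$ into that relation to obtain $C\,z^p/x^p=C\,\overline{z}^p/\overline{x}^p$. As $C$ is a nonzero integer and conjugation is a field automorphism of $K$, this says that $(z/x)^p$ is fixed by conjugation, hence $(z/x)^p\in\Q$. Writing $z/x=a+b\sqrt{d}\in K$, the condition $(z/x)^p\in\Q$ is precisely $Im\!\left((z/x)^p\right)=0$, and because $p>3$ Theorem \ref{thm:imaginarypartzero} forces $b=0$; that is, $z/x=\delta$ for some $\delta\in\Q$.

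Next, substituting $z=\delta x$ into $Ax^p+By^p+Cz^p=0$ gives $By^p=-(A+C\delta^p)x^p$, so $(y/x)^p=-(A+C\delta^p)/B\in\Q$ (here $B\neq 0$ since $A,B,C$ are coprime $p$th-powerfree integers). Applying Theorem \ref{thm:imaginarypartzero} a second time yields $y/x=\gamma\in\Q$. Now $(x,y,z)$ is a nontrivial solution with $y=\gamma x$ and $\gamma\in\Q$, which is exactly the hypothesis of Proposition \ref{thm:multiplosracionales}, and that proposition produces the desired primitive solution $(\pm\delta_2,\gamma_1,\pm\delta_1)$ in $\Z$.

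The individual steps are short, so there is no deep obstacle here; the one point demanding care is the implication $(z/x)^p\in\Q\Rightarrow z/x\in\Q$. This fails for $p=2,3$ (compare the exceptional values $d=-1,-3$ appearing in Theorem \ref{thm:imaginarypartzero}) and depends essentially on the standing hypothesis $p>3$. One should also record that $x\neq 0$ throughout, which holds because a nontrivial solution satisfies $xyz\neq 0$ by Lemma \ref{lem:contra}, so each division above is legitimate, and that $B,C\neq 0$ so that neither field-automorphism reduction degenerates.
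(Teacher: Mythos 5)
Your proposal is correct and follows essentially the same route as the paper: set $n=0$ in the relation $(\ref{eq:Ymn01})$, deduce $(z/x)^p\in\Q$, apply Theorem \ref{thm:imaginarypartzero} (using $p>3$) to get $z/x\in\Q$, and invoke Proposition \ref{thm:multiplosracionales}. Your extra step deriving $y/x=\gamma\in\Q$ is a small but worthwhile refinement, since Proposition \ref{thm:multiplosracionales} is stated with the hypothesis $y=\gamma x$ while the paper applies it directly from $z=\gamma x$, tacitly relying on the symmetry between $B$ and $C$.
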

\begin{proof}
Suppose $n=0$ in $(\ref{eq:Ymn01})$ for a nontrivial solution $(x,y,z)$ in $K$ to $(\ref{eq:00})$, then $\dfrac{z^p}{x^p}=\dfrac{\overline{z}^p}{\overline{x}^p}$ is a rational number. Thus, by Theorem \ref{thm:imaginarypartzero}, we have that
$\dfrac{z}{x}=\gamma$ with $\gamma\in\Q$, i.e., $z=\gamma x$. Finally, by Proposition $\ref{thm:multiplosracionales}$ there exists a primitive solution in $\Z$ to $(\ref{eq:00})$.
\end{proof}

In particular, when $Y=0$, we have the following result.

\begin{proposition}\label{prop:patch}
Let $(X,Y)$ be a point on $Y^2 =X^p+(A^2(BC)^{p-1})/4$ with $Y=0$ coming from a nontrivial solution $(x,y,z)$ in $K$ to $Ax^p+By^p+Cz^p=0$, then $A=\pm 2$, $BC=\pm 1$ and $(x,y,z)=(\pm 1,\pm 1,1)$.
\end{proposition}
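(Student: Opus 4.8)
The plan is to read off the two equations that the hypothesis $Y=0$ imposes on the triple $(x,y,z)$, namely the defining relation coming from the change of variables $(\ref{eq:cambiodevariable})$ together with the original equation $(\ref{eq:00})$, and then to show by elementary divisibility that both ratios $y/z$ and $x/z$ are forced to equal $\pm1$. First I would record that, since the solution is nontrivial, $x\neq 0$, and since $A,B,C$ are $p$th powerfree integers they are nonzero, so $(-BC)^{(p-1)/2}\neq 0$. Hence $Y=0$ is equivalent to $By^p-Cz^p=0$, that is $By^p=Cz^p$.

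Next I would extract rationality of $y/z$. From $By^p=Cz^p$ we get $(y/z)^p=C/B\in\Q$, so writing $y/z=a+b\sqrt d$ and invoking Theorem \ref{thm:imaginarypartzero}(c)(i), which applies because $p>3$, gives $b=0$; thus $y/z=\gamma_1/\gamma_2\in\Q$ in lowest terms. Substituting yields $B\gamma_1^p=C\gamma_2^p$, and coprimality of $\gamma_1,\gamma_2$ forces $\gamma_1^p\mid C$ and $\gamma_2^p\mid B$. Because $B$ and $C$ are $p$th powerfree this gives $\gamma_1=\gamma_2=\pm1$, so $y=\pm z$ and $B=\pm C$; combined with $\gcd(B,C)=1$ we conclude $B=\pm1$, $C=\pm1$, and hence $BC=\pm1$.

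Then I would repeat the argument for the ratio $x/z$. Feeding $By^p=Cz^p$ back into $(\ref{eq:00})$ gives $Ax^p+2Cz^p=0$, so $(x/z)^p=-2C/A\in\Q$, and Theorem \ref{thm:imaginarypartzero} again forces $x/z=\mu_1/\mu_2\in\Q$ in lowest terms, with $A\mu_1^p=-2C\mu_2^p$. Since $C=\pm1$ we have $\mu_1^p\mid 2$, and because $p>3$ the only $p$th power dividing $2$ is $1$, so $\mu_1=\pm1$; then $\mu_2^p\mid A$ with $A$ being $p$th powerfree gives $\mu_2=\pm1$. Therefore $x=\pm z$, and reading off the constant term yields $A=\pm2C=\pm2$.

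Finally I would assemble the conclusion: we have $x=\pm z$ and $y=\pm z$, so $(x,y,z)=(\pm z,\pm z,z)$, which after dividing by the common factor $z$ is exactly the reduced solution $(\pm1,\pm1,1)$ in the sense already used in Lemma \ref{lem:contra}, together with $A=\pm2$ and $BC=\pm1$. The only step requiring genuine care is the repeated passage from an identity of the form $(\text{ratio})^p\in\Q$ to the ratio being $\pm1$: this is precisely where the hypotheses $p>3$ and $p$th powerfreeness of $A,B,C$ are indispensable, since for small $p$ or for coefficients admitting $p$th power factors the divisibility bookkeeping (for instance $\mu_1^p\mid 2\Rightarrow\mu_1=\pm1$) could break down and allow spurious solutions. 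I would therefore spell out each divisibility deduction explicitly rather than appeal to Proposition \ref{thm:multiplosracionales} wholesale.
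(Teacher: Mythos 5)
Your proof is correct, and its skeleton matches the paper's: both reduce $Y=0$ via $(\ref{eq:cambiodevariable})$ to $By^p=Cz^p$, force $BC=\pm 1$, deduce $y=\pm z$, substitute back into $(\ref{eq:00})$ to get $(x/z)^p=-2C/A$, and conclude $A=\pm 2$, $x=\pm z$, hence $(x,y,z)=(\pm 1,\pm 1,1)$ up to the homogeneous scaling by $z$. The one genuine divergence is the mechanism in the first step: the paper applies the norm to $C/B=y^p/z^p$, obtaining $N(y)/N(z)=\sqrt[p]{C^2/B^2}\in\Q$ and declaring this incompatible with $B$, $C$ being coprime $p$th powerfree integers (with $p$ odd) unless $BC=\pm 1$ --- it never needs $y/z$ itself to be rational at that stage. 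You instead invoke Theorem \ref{thm:imaginarypartzero} to get $y/z\in\Q$ and run an explicit lowest-terms divisibility argument ($\gamma_1^p\mid C$, $\gamma_2^p\mid B$, powerfreeness forces $\gamma_1=\gamma_2=\pm 1$), which delivers $y=\pm z$ and $B=\pm C$ in one stroke and, with the standing hypothesis $\gcd(B,C)=1$, pins $B,C\in\{\pm 1\}$. Your route has the merit of making explicit something the paper leaves tacit in its second half: when the paper asserts that $\sqrt[p]{\pm A/2}$ lies in $\Q$, it is implicitly using that $z/x$ is rational, which is precisely your appeal to Theorem \ref{thm:imaginarypartzero}; you apply the same two-line pattern (rationality of the ratio, then divisibility in lowest terms) uniformly to both ratios, whereas the paper's norm computation is shorter but only controls $N(y)/N(z)$ and leans on a terse ``contradiction unless'' step. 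Note that both arguments genuinely need the pairwise coprimality of the coefficients --- your passage from $B=\pm C$ to $B,C=\pm 1$ uses $\gcd(B,C)=1$, and the paper's norm step would likewise fail without it (e.g.\ $B=C$ gives $C^2/B^2=1$, a perfect $p$th power, with $BC\neq\pm 1$) --- so you were right to spell out where each divisibility deduction uses $p>3$ and powerfreeness.
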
 
\begin{proof}
Suppose $Y=0$ for a nontrivial solution $(x,y,z)$ in $K$ to $(\ref{eq:00})$, then by $(\ref{eq:cambiodevariable})$ we have that $Cz^p=By^p$,
implying that
\begin{align*}
\dfrac{C}{B}&=\dfrac{y^p}{z^p}
\end{align*}
Applying the norm on both sides of the equation give us
\begin{align*}
\dfrac{C^2}{B^2}&=\left(\dfrac{N(y)}{N(z)}\right)^p\\
\sqrt[p]{\left(\dfrac{C}{B}\right)^2}&=\dfrac{N(y)}{N(z)}\in\Q
\end{align*}
This is a contradiction since both $B$ and $C$ are $p$th powerfree and $p$ is odd unless $BC=\pm 1$. When $BC=\pm 1$, we will have $\dfrac{y^p}{z^p}=\pm 1 $, implying that $y = \pm z$, and so
\[
\dfrac{z^p}{x^p}=\dfrac{\pm A }{2}
\]
Since $A$ is a $p$th powerfree integer we will have that $\sqrt[p]{\dfrac{\pm A}{2}}$ lies in $\Q$ if and only if $A=\pm 2$. This means $\dfrac{x^p}{z^p}=\pm 1$ and, in this way $x=\pm z$ and $y=\pm z$. Therefore $(x,y,z)=(\pm1,\pm 1,1)$.
\end{proof}
In particular, when $X=0$, we would have that either $y$ or $z$ is $0$, and by Lemma \ref{lem:contra}, we will have that $(x,y,z)$ is the trivial solution $(0,0,0)$, which is a contradiction.

\begin{corollary}
There are no solutions $(x,y,z)$ to $Ax^p+By^p+Cz^p=0$ in $K\setminus\Q$ when $BC\neq \pm 1$.
\end{corollary}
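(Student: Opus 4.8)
The plan is to argue by contradiction, using the two propositions immediately preceding the corollary together with Theorem~\ref{thm:finalthm00} as a dichotomy on the associated point. Assume $BC\neq\pm 1$ and suppose, for contradiction, that a nontrivial solution $(x,y,z)$ lies genuinely in $K\setminus\Q$, by which I mean that the projective triple $(x:y:z)$ is not rational. Attaching to this solution the point $(X,Y)$ on $(\ref{eq:01})$ through the change of variables $(\ref{eq:cambiodevariable})$, I would write $Y=m+n\sqrt{d}$ with $m,n\in\Q$. The whole argument then rests on Theorem~\ref{thm:finalthm00}, which forces $mn=0$; hence either $m=0$ (that is $Re(Y)=0$) or $n=0$ (that is $Im(Y)=0$), and I would treat these two cases in turn.

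In the case $m=0$, I would invoke the proposition treating points with $Y=n\sqrt{d}$. Its proof shows that $m=0$ forces $B^2/C^2$ to be a perfect $p$th power in $\Q$, whence $\sqrt[p]{B^2/C^2}\in\Q$ and therefore $BC=\pm 1$. This directly contradicts the standing hypothesis $BC\neq\pm 1$, so this case cannot occur.

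In the complementary case $n=0$, so that $Y\in\Q$, the corresponding proposition shows that $z/x=\gamma$ for some $\gamma\in\Q$. I would then feed this into Proposition~\ref{thm:multiplosracionales} and the remark following it: once one coordinate ratio is rational, equation $(\ref{eq:00})$ forces the remaining ratio $y/x$ to be rational as well, so that $(x,y,z)$ is a scalar multiple of a rational triple and $(x:y:z)$ is a rational point. This contradicts the assumption that the solution lies in $K\setminus\Q$, and eliminates the second case.

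The step that most needs care is not a computation but a matter of interpretation and of the degenerate overlap $m=n=0$. For the interpretation, the remark following Proposition~\ref{thm:multiplosracionales} is precisely what certifies that the $n=0$ branch yields only scalar multiples of rational triples, so that such solutions are legitimately excluded from $K\setminus\Q$; without that remark one could worry that rationality of a single ratio fails to rationalize the whole point. For the overlap, the case $Y=0$ is handled by Proposition~\ref{prop:patch}, which again forces $BC=\pm 1$ and so is absorbed into the first case. With both cases ruled out, no nontrivial solution in $K\setminus\Q$ can exist once $BC\neq\pm 1$, which is the assertion of the corollary.
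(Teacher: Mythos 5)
Your proposal is correct and follows exactly the argument the paper intends: the corollary is stated as an immediate consequence of Theorem~\ref{thm:finalthm00} (forcing $mn=0$) combined with the two propositions on the cases $Re(Y)=0$ and $Im(Y)=0$, with Proposition~\ref{prop:patch} covering the overlap $Y=0$, which is precisely the dichotomy you assemble. Your explicit projective reading of ``in $K\setminus\Q$'' (excluding $K$-scalar multiples of rational triples, which the $n=0$ branch produces) is the right interpretation of the paper's claim and is handled correctly via the remark after Proposition~\ref{thm:multiplosracionales}.
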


\section{Particular case $ABC=\pm 1$}
Let $(x,y,z)$ be a nontrivial solution in $K$ to the Diophantine equation 
\[
Ax^p+By^p+Cz^p=0
\]
when $ABC=\pm 1$. Then, we can construct the hyperelliptic curve
\[
Y^2 = X^p+\dfrac{1}{4}\Leftrightarrow Y^2 = X^p+2^{2p-2}
\]
On the other hand, let $(X,Y)$ be a point on $Y^2 = X^p+2^{2p-2}$. Then, we can construct a triplet $(x,y,z)$ as
\[
(x,y,z)=((2^{2p-2})^2XY,-2^{2p-2}X^pY,2^{2p-2}XY^2)
\]
which is a nontrivial solution in $K$ to $(\ref{eq:00})$, e.g.,
\begin{align*}
Ax^p+By^p+Cz^p&=A((2^{2p-2})^2XY)^p+B(-2^{2p-2}XY^2)^p+C(2^{2p-2}X^pY)^p\\
&= \pm1(2^{2p-2})^2XY)^p(2^{2p-2}-Y^2+X^p)\\
&=0
\end{align*}
In this particular case, we have a complete description of all possible solutions and points for our objects of study in any quadratic field.

\section*{Acknowledgements}
\thispagestyle{empty}
The first author was partially supported by SNI - CONACYT. The second author was supported by the PhD Grant 783130 CONACYT - Government of México.

\end{document}